\newcommand{\N}{{\mathbb N}}
\newcommand{\Z}{{\mathbb Z}}
\theoremstyle{plain}
\newtheorem{thm}{Theorem}[section]
\newtheorem{lem}[thm]{Lemma}
\newtheorem{prop}[thm]{Proposition}
\theoremstyle{definition}
\newtheorem{dfn}[thm]{Definition}
\theoremstyle{remark}
\newtheorem{exa}[thm]{Example}
\newtheorem{rem}[thm]{Remark}
\DeclareMathOperator{\clr}{clr}
\DeclareMathOperator{\sgn}{sgn}
\DeclareMathOperator{\id}{id}
\DeclareMathOperator{\out}{out} \DeclareMathOperator{\inn}{in}
\newcommand{\rpd}{\Omega_p}
\newcommand{\To}{\longrightarrow}
\newcommand{\A}{\mathcal{A}b}
\newcommand{\Ch}{\mathcal{C}h}
\newcommand{\Oper}{\mathcal{O}per}
\newcommand{\dCh}{d\mathcal{C}h}
\newcommand{\dAb}{d\mathcal{A}b}
\newcommand{\X}{\Theta}
\numberwithin{equation}{section}
\numberwithin{figure}{section}
\begin{document}

\title[Dold-Kan correspondence for dendroidal abelian groups]{Dold-Kan correspondence for \\ dendroidal abelian groups}

\author[J. J. Guti\'errez]{Javier J. Guti\'errez}
\address{Centre de Recerca Matem\`atica, Apartat 50, 08193 Bellaterra (Barcelona), Spain}
\email{JGutierrez@crm.cat}
\author[A. Lukacs]{Andor Lukacs}
\author[I. Weiss]{Ittay Weiss}
\address{Mathematisch Instituut, Postbus 80.010, 3508 TA Utrecht, The Netherlands}
\email{A.Lukacs@math.uu.nl}
\email{I.Weiss@uu.nl}

\keywords{Dold-Kan correspondence, dendroidal set, dendroidal
abelian group} \subjclass[2000]{Primary: 55U05, 18G30 ; Secondary:
18D50}

\begin{abstract}
We prove a Dold-Kan type correspondence between the category of
dendroidal abelian groups and a suitably constructed category of
dendroidal complexes. Our result naturally extends the classical
Dold-Kan correspondence between the category of simplicial abelian
groups and the category of non-negatively graded chain complexes.
\end{abstract}

\maketitle

\section{Introduction}
The classical Dold-Kan correspondence \cite{dold, kan} states that there is an
equivalence between the category $s\A$ of simplicial abelian groups
and the category $\Ch$ of non-negatively graded chain complexes of
abelian groups. This equivalence is given by the normalized functor
$N_s$ that sends a simplicial abelian group $A$ to the chain complex
$$
N_s(A)(n)=\bigcap_{i=0}^{n-1}\ker(\partial_i^*),
$$
where $\partial_i^*\colon A_n\longrightarrow A_{n-1}$ are the induced face maps, and the differential is defined by $d=(-1)^n\partial_n^*$. Moreover, the
functor $N_s$ has a right adjoint $\Gamma_s$ such that the adjoint pair
$$
\xymatrix{
N_s\colon s\A \ar@<3pt>[r] & \ar@<3pt>[l] \Ch\colon \Gamma_s
}
$$
is an adjoint equivalence. There exist several generalizations of
this correspondence which characterize simplicial objects in more
structured algebraic categories in terms of the appropriate
algebraic chain objects \cite{pirashvili, slominska}.

In this paper, we extend this result to the framework of (planar)
dendroidal sets. The category of dendroidal sets is a presheaf
category on a certain category of trees~$\Omega_p$. They generalize
simplicial sets in a suitable way for studying the homotopy theory
of coloured operads and their algebras \cite{moerdijkweiss}. The
idea behind the notion of dendroidal sets is that in the same way as
simplicial sets help us understanding categories via the nerve
functor, there should be an analogous notion for studying coloured
operads as generalization of categories. Indeed much of the
fundamentals of simplicial sets that relate to category theory
extend to dendroidal sets. In \cite{moerdijkweiss2}, Moerdijk and
the third-named author develop the theory of inner Kan complexes in
the category of dendroidal sets. Inner Kan complexes in the category
of simplicial sets were first introduced by Boardman and Vogt in
\cite{BV73}. In a later paper \cite{joyal} Joyal began a
reexamination of inner Kan complexes under the name quasi
categories. One of the results of his research is the establishment
of a Quillen model category structure on simplicial sets in which
the fibrant objects are precisely the inner Kan complexes. The same
model structure was also obtained by Lurie in his work on higher
topos theory \cite{lurie}. In \cite{moerdijkcisinski} Cisinski and
Moerdijk establish a Quillen model structure on the category of
dendroidal sets in which the fibrant objects are precisely the inner
Kan complexes. The results in this paper add to the above mentioned
theory by showing that the Dold-Kan correspondence similarly extends
to dendroidal sets.

There is a fully faithful
embedding $i\colon\Delta\longrightarrow\Omega_p$, where $\Delta$
denotes the simplicial category, inducing an adjoint pair
$$
\xymatrix{
i_!\colon s\A \ar@<3pt>[r] & \ar@<3pt>[l] d\A\colon i^*,
}
$$
where $d\A$ denotes the category of (planar) dendroidal abelian
groups.

The category of chain complexes is not big enough to define a Dold-Kan correspondence for the category of dendroidal abelian groups. In order to
solve this problem, we introduce the category of dendroidal complexes $d\Ch$. The category of dendroidal complexes is a category of $\Omega_p$-graded abelian groups together with structure maps induced by the face maps in $\Omega_p$ and satisfying certain conditions. In the same way as the category of dendroidal abelian groups extends the category of simplicial abelian groups, the category of dendroidal complexes extends the category of chain complexes, i.e., there are adjoint functors
$$
\xymatrix{
j_!\colon \Ch \ar@<3pt>[r] & \ar@<3pt>[l] d\Ch\colon j^*,
}
$$
where $j^*$ is the restriction functor and its left adjoint $j_!$ is `extension by zero'.

We define a normalized functor $N\colon d\A\longrightarrow d\Ch$ and a right adjoint $\Gamma$ and prove that they form an adjoint equivalence of
categories. We also show that there is a commutative diagram of adjoint functors
$$
\xymatrix@R+3mm@C+3mm{ s\A\ar@<2pt>[r]^-{i_!}\ar@<-2pt>[d]_{N_s} &\dAb\ar@<-2pt>[d]_N\ar@<2pt>[l]^-{i^*} \\
\Ch\ar@<2pt>[r]^-{j_!}\ar@<-2pt>[u]_{\Gamma_s}&\dCh\ar@<2pt>[l]^-{j^*}\ar@<-2pt>[u]_{\Gamma},}
$$
relating the classical Dold-Kan correspondence with the dendroidal one.

\bigskip
\noindent\textbf{Acknowledgements.} We would like to thank Ieke
Moerdijk for helpful suggestions and comments. The first-named
author wishes to thank the Department of Mathematics at Utrecht
University for its hospitality.
\section{A formalism of trees}

A \emph{tree} is a connected finite graph with no loops. A vertex in
a graph is called \emph{unary} if it has only one edge attached to
it. All the trees we will consider are \emph{rooted trees}, i.e.,
equipped with a distinguished unary vertex called the \emph{output}
and a set of unary vertices (not containing the output vertex)
called the set of \emph{inputs}.

When drawing trees, we will represent them as oriented towards the output, drawn at the bottom, and we will delete the
output and input vertices from the picture. From now on, the term `vertex' in a tree will always refer to a
remaining vertex. Given a tree $T$, we denote by $V(T)$ the set of vertices of $T$ and by $E(T)$ the set of edges of $T$.

The edges attached to the deleted input vertices are called \emph{input edges}
or \emph{leaves}; the edge attached to the deleted output vertex is called \emph{output edge} or \emph{root}. The rest of
the edges are called \emph{inner edges}. The root induces an obvious direction in the tree, `from the leaves towards the
root'. If $v$ is a vertex of a finite rooted tree, we denote by $\out(v)$ the unique
outgoing edge and by $\inn(v)$ the set of incoming edges (note that $\inn(v)$ can be empty).
The cardinality of $\inn(v)$ is called the \emph{valence of $v$}, the element of $\out(v)$ is the
\emph{output of $v$} and the elements of $\inn(v)$ are the \emph{inputs of~$v$}.

As an example, consider the following picture of a tree $T$:

\begin{equation}
\xy<0.08cm, 0cm>: (0,-15)*{}="1"; (0,-5)*=0{\bullet}="2";
(-20,5)*=0{\bullet}="3"; (0,5)*=0{}="4"; (20,5)*=0{\bullet}="5";
(-30,15)*{}="6"; (-10,15)*{}="7"; (10,15)*{}="8"; (20,15)*{}="9";
(30,15)*=0{\bullet}="10"; (-2,-10)*=0{\scriptstyle a};
(2.5,-6)*=0{\scriptstyle u}; (-2,0)*=0{\scriptstyle c};
(-22.5,5)*=0{\scriptstyle v}; (23,5)*=0{\scriptstyle w};
(-28,10)*=0{\scriptstyle e}; (28,10)*=0{\scriptstyle i};
(-12,10)*=0{\scriptstyle f}; (12,-1)*=0{\scriptstyle d};
(-12,-1)*=0{\scriptstyle b}; (12,10)*=0{\scriptstyle g};
(22,10)*=0{\scriptstyle h}; (32.5,15)*=0{\scriptstyle z};
(-10,-15)*{T}; "1";"2" **\dir{-}; "2";"3" **\dir{-}; "2";"4"
**\dir{-}; "2";"5" **\dir{-}; "3";"6" **\dir{-}; "3";"7" **\dir{-};
"5";"8" **\dir{-}; "5";"9" **\dir{-}; "5";"10" **\dir{-};
\endxy
\label{treeexample}
\end{equation}
The output vertex at the edge $a$ and the input vertices at $e$, $f$, $c$, $g$ and $h$ have been deleted.
This tree has four vertices $u$, $v$, $w$ and $z$ of respective valences 3, 2, 3 and 0. It also has five input
edges or leaves, namely $e$, $f$, $c$, $g$ and $h$. The edges $b$ and $d$ are inner edges and the edge $a$ is the root.

Since every planar representation of a rooted tree
comes naturally with an ordering of the inputs of any given vertex
(from left to right), we can give the following definition:

\begin{dfn}
A \emph{planar rooted tree} is a rooted tree $T$ together with a linear ordering of
$\inn(v)$ for each vertex $v$ of $T$.
\end{dfn}

In the rest of the paper we will work with \emph{planar rooted trees}, unless otherwise stated, and for the sake of
bookkeeping whenever it is obvious from the context we will refer to them as trees.

\section{Coloured operads and the dendroidal category}
The dendroidal category $\Omega$ was introduced in
\cite{moerdijkweiss,Wei07} as an extension of the simplicial
category $\Delta$. The category $\Omega$ is a category of trees. Its
objects are (non-planar) rooted trees and the set of morphisms
between two trees is given by the set of maps between the symmetric
coloured operads associated to each of them. The presheaves in
$\Omega$, called \emph{dendroidal sets}, are very useful in the
study of operads and their algebras in the framework of homotopy
theory. Since the terminology on dendroidal sets is  recent, we will
recall all the needed parts of it here.

In this section, we are going to describe a a variation on the
category $\Omega$, called the \emph{planar dendroidal category},
which we denote by $\rpd$, whose objects are the planar rooted
trees. More concretely, let $P\colon \Omega^{\rm op}\longrightarrow
\mathcal{S}ets$ be the presheaf on $\Omega$ that sends each tree to
its set of planar structures. Then $P(T)$ is a torsor under ${\rm
Aut}(T)$ for every tree $T$, where ${\rm Aut}(T)$ denotes the set of
automorphisms of $T$, and $\rpd$ is equal to the category of
elements $\Omega/P$, whose objects are pairs $(T, x)$, with $x\in
P(T)$, and a morphism between two objects $(T,x)$ and $(S,y)$ is
given by a morphism $f\colon T\longrightarrow S$ in $\Omega$ such
that $P(f)(y)=x$.

In order to have a better understanding of the morphisms in $\rpd$
we need the notion of coloured operad. For us all coloured operads
come \emph{without} an action of the symmetric group. Usually they
are referred as non-symmetric coloured operads or non-symmetric
multicategories in the literature (see, for example,
\cite{bergermoerdijk, markl}).

\subsection{Coloured operads}
A coloured operad $P$ consists of a set of \emph{colours}, denoted
by $\clr(P)$, together with a set of \emph{operations}
$P(c_1,\ldots,c_n;c)$ for every $n\ge 0$ and each ordered
$(n+1)$-tuple of colours $(c_1,\ldots, c_n; c)$, and a distinguished
operation $\id_c$ in $P(c;c)$ for every color $c$, called the
\emph{identity on c}. These operations are related by means of
\emph{composition product} maps $\circ_i$
$$
\xymatrix{P(c_1,\ldots, c_i,\ldots,c_n;c)\times P(a_1,\ldots,a_m;c_i)\ar[d]_-{\circ_i}\\
P(c_1,\ldots, c_{i-1},a_1,\ldots,a_m,c_{i+1},\ldots,c_n;c)}
$$
for every $c_1, \ldots, c_n, c$ and $a_1,\ldots, a_m$ in $C$, and
every $1\leq i\leq n$. The composition product maps are subject to
the usual associativity and unitary compatibility relations; see,
for example, \cite{leinster}. If $u\in P(c_1,\ldots,c_n;c)$ is an
operation, then the sequence $(c_1,\ldots,c_n)$ is called the
\emph{input} of $u$ and $c$ is called the \emph{output} of $u$.

A \textit{map of coloured operads} $\varphi\colon P\To Q$ consists
of a map $\clr(\varphi)\colon \clr(P)\To \clr(Q)$ between the
colours and maps $P(c_1,\ldots, c_n;c)\To Q(\varphi(c_1),\ldots,
\varphi(c_n);\varphi(c))$ on the operations, such that $\varphi$
sends units to units and is compatible with the composition product
of operations. We denote by $\Oper$ the category of coloured
operads.

Coloured operad generalize small categories. Any small category
$\mathcal{C}$ can be viewed as a coloured operad
$j_!(\mathcal{C})$, where $\clr(j_!(\mathcal{C}))$
is precisely the set of objects of $\mathcal{C}$ and the only
operations are $j_!(\mathcal C)(A;B)=\mathcal{C}(A,B)$. There is a pair of
adjoint functors
$$
\xymatrix{
j_!\colon \mathcal{C}at \ar@<3pt>[r] & \ar@<3pt>[l] \Oper\colon j^*
}
$$
where $\mathcal{C}at$ denotes the category of small categories. The left
adjoint $j_!$ is full and faithful. The right adjoint $j^*$ sends a
coloured operad $P$ to the category $j^*(P)$ whose objects are the
colours of $P$ and whose morphisms are the unary operations of $P$.

In a similar way we can think of operations as generalizations of the
notion of morphisms in categories. If $P$ is a coloured
operad, a suitable intuitive way to depict an operation
$u\in P(c_1,\ldots, c_n;c)$ is to draw the tree
$$
\xy<0.08cm, 0cm>:
(0,0)*{}="1";
(0,10)*=0{\bullet}="2";
(-20,20)*{}="3";
(20,20)*{}="4";
"1";"2" **\dir{-};
"2";"3" **\dir{-};
"2";"4" **\dir{-};
(-2.5,5)*=0{\scriptstyle c};
(-10.5, 13)*=0{\scriptstyle c_1};
(10.5, 13)*=0{\scriptstyle c_n};
(2.5, 9)*=0{\scriptstyle u};
(0, 15)*=0{\cdots\cdots};
\endxy
$$
Note that the case $n=0$ is allowed in the definition of operations,
thus there are operations $u\in P(\ ;c)$ with no input and one
output. Operations of this type can be thought as playing the role
of constants. This is represented by the tree
$$
\xy<0.08cm, 0cm>:
(0,0)*{}="1";
(0,10)*=0{\bullet}="2";
"1";"2" **\dir{-};
(-2.5,5)*=0{\scriptstyle c};
(2.5, 9)*=0{\scriptstyle u};
\endxy
$$

To have a good intuition on the nature of the associativity
relations of the composition product, we can use pictures of trees.
A composition product map $\circ_i$ can be thought of as taking  two
operations $u, v$ as its inputs, depicted like a tree with two
vertices
$$
\xy<0.08cm, 0cm>:
(0,0)*{}="1";
(0,10)*=0{\bullet}="2";
(0,20)*=0{\bullet}="2a";
(-20,20)*{}="3";
(20,20)*{}="4";
(-20,30)*{}="5";
(20,30)*{}="6";
"1";"2" **\dir{-};
"2";"3" **\dir{-};
"2";"4" **\dir{-};
"2";"2a" **\dir{-};
"2a";"5" **\dir{-};
"2a";"6" **\dir{-};
(-2.5,5)*=0{\scriptstyle c};
(-11, 13)*=0{\scriptstyle c_1};
(11, 13)*=0{\scriptstyle c_n};
(2.5, 9)*=0{\scriptstyle u};
(-2,15)*=0{\scriptstyle c_i};
(-7, 16)*=0{\cdots};
(7, 16)*=0{\cdots};
(-11, 23)*=0{\scriptstyle a_1};
(11, 23)*=0{\scriptstyle a_m};
(2.5, 19)*=0{\scriptstyle v};
(0, 25)*=0{\cdots\cdots};
\endxy
$$
and producing a new operation by `grafting' the edge $c_i$ (the sole
inner edge of the tree) as follows
$$
\xy<0.08cm, 0cm>:
(0,0)*{}="1";
(0,10)*=0{\bullet}="2";
(-10,20)*{}="3";
(10,20)*{}="4";
(-30,20)*{}="5";
(30,20)*{}="6";
"1";"2" **\dir{-};
"2";"3" **\dir{-};
"2";"4" **\dir{-};
"2";"5" **\dir{-};
"2";"6" **\dir{-};
(-2.5,5)*=0{\scriptstyle c};
(-14, 13)*=0{\scriptstyle c_1};
(12, 18)*=0{\scriptstyle a_m};
(-11, 18)*=0{\scriptstyle a_1};
(14, 13)*=0{\scriptstyle c_n};
(5, 9)*=0{\scriptstyle u\circ_i v};
(0, 15)*=0{\cdots};
(9, 15)*=0{\cdots};
(-9, 15)*=0{\cdots};
\endxy
$$
The associativity relation states that whenever we have three operations depicted on a tree with three vertices (thus
with two inner edges) then grafting both of the inner edges does not depend on the chosen order.

The definition we give for the composition product of a coloured operad is not the common one in the literature, but it is
equivalent to it (see the same definition in \cite{markl} for the one-colour case, or an alternative definition for the
general case in~\cite{bergermoerdijk,leinster,moerdijkweiss}).

\subsection{The planar dendroidal category }\label{the basic category}
Any tree $T$ gives rise to a (non-symme\-tric) coloured operad which
we denote by $\Omega_p(T)$. The set of colours is the set of edges of
$T$ and the operations of $\Omega_p(T)$ are freely generated by the
vertices of $T$. That is, if $v$ is a vertex with the ordered
sequence of input edges $(c_1, c_2, \ldots, c_n)$ and output edge
$c$ then $\Omega_p(T)(c_1,\ldots,c_n;c)=\{v\}$. All the other
operations are identities or compositions of the previous ones. As a
consequence, any set of operations is either empty or it contains
only one element.

For example, if $T$ is the tree depicted in (\ref{treeexample}), then there are four generating operations:
$\Omega_p(T)(e,f;b)=\{v\}$, $\Omega_p(T)(b,c,d;a)=\{u\}$, $\Omega_p(T)(g,h,i;d)=\{w\}$ and $\Omega_p(T)(\ ;i)=\{z\}$.
For the other sets of operations, we have $\Omega_p(T)(e,f,c,d;a)=\{u\circ_b v\}$ and so on.
Observe that the $b$ in $\circ_b$ refers to the position of $b$ in the ordered sequence of input edges of $u$, i.e.,
$\circ_b=\circ_1$, and in general this notation is not ambiguous since $\Omega_p(T)(c_1,\ldots, c_n;c)=\emptyset$ if $c_i=c_j$
for some $i\neq j$ or $c_i=c$. We will keep using this notation in what follows.

Now, consider the category whose objects are planar rooted trees and whose morphisms $R\To T$ are given by coloured
operad maps $\Omega_p(R)\To\Omega_p(T)$. Note that if $\Omega_p(R)\longrightarrow \Omega_p(T)$ is an isomorphism, then the
non-symmetric operad structures imply that $R$ and $T$ have the same planar shape and they differ only by the names of
their vertices and edges. We define the \textit{planar dendroidal category} as a skeleton of this category.

\begin{dfn}
The \emph{planar dendroidal category} $\rpd$ is the category whose objects are isomorphism classes of
planar rooted trees and the morphisms are given by coloured operad maps, i.e.,
$$
\rpd(R,T)=\Oper(\Omega_p(R), \Omega_p(T))
$$
for every two trees $R$ and $T$ in $\rpd$.
\end{dfn}

In order to simplify the notation, we will omit mentioning
isomorphism classes and we will write $T$ instead of $[T]$.

The simplicial category $\Delta$ can be viewed as the full
subcategory of $\mathcal{C}at$, the category of small categories,
spanned by $\{[n]\,|\, n\ge 0\}$, where $[n]$ is the category whose
objects are $\{0,1,\ldots, n\}$ and for $0\le i,j\le n$ there is
only one arrow $i\longrightarrow j$ if $i\le j$. The category
$\Omega_p$ extends the category $\Delta$. Indeed, if we denote by
$L_n$ the linear tree with $n$ vertices and $n+1$ edges,

then the simplicial category can be identified with the full subcategory of $\rpd$ consisting of linear trees as objects
by means of a functor
\begin{equation}
i\colon \Delta\longrightarrow \Omega_p
\label{embedding of delta}
\end{equation}
sending $[n]$ to $L_n$, which is a full and faithful embedding, since $j_!([n])\cong\Omega_p(L_n)$.

Observe that there is a canonical order on the edges of a linear
tree by numbering them in increasing order from bottom to top.
Whenever we speak of an order on the edges of a linear tree we will
be referring to this order.

The morphisms in $\rpd$ are generated by two types of maps called \emph{faces} and
\emph{degeneracies}, which we discuss in the following sections.

\subsection{Face maps} Suppose that $T$ is (a representative of) an object of $\rpd$ and
$b$ is an inner edge of $T$, as in~(\ref{innerface}) below. Denote by $T/b$ the tree obtained from $T$ by contracting
$b$. There is a face map corresponding to this operation $\partial_b:T/b\To T$ which is the inclusion on the colours of $\Omega_p(T/b)$
and on the generating operations of $\Omega_p(T/b)$, except for the operation  $u$, which is sent to $v\circ_b w$.
The face maps $\partial_b$ associated to inner edges in such a way are the called \emph{inner faces} of~$T$.

\begin{equation}
\xy<0.08cm,0cm>:
(-12,-4)*=0{}="t1";
(-6,1)*=0{}="t2";
(6,1)*=0{\bullet}="t3";
(12,-4)*=0{}="t4";
(0,-10)*=0{\bullet}="t5";
(0,-20)*=0{}="t6";
(40,10)*=0{}="5";
(50,10)*=0{}="6";
(60,10)*=0{\bullet}="7";
(50,0)*=0{\bullet}="8";
(70,0)*=0{}="9";
(60,-10)*=0{\bullet}="10";
(60,-20)*=0{}="11";
(-10,-7)*{\scriptstyle d};
(-6,-3)*{\scriptstyle e};
(6,-3)*{\scriptstyle f};
(9,-7)*{\scriptstyle c};
(-2,-15)*{\scriptstyle a};
(3,-10)*{\scriptstyle u};
(41,6)*{\scriptstyle d};
(48,6)*{\scriptstyle e};
(58,6)*{\scriptstyle f};
(53,-5)*{\scriptstyle b};
(67,-5)*{\scriptstyle c};
(63,-10)*{\scriptstyle v};
(58,-15)*{\scriptstyle a};
(53,0)*{\scriptstyle w};
(30,-7)*{\partial_b};
(8.5,1)*{\scriptstyle z};
(62.5,10)*{\scriptstyle z};
(-10,-20)*=0{T/b};
(52,-20)*=0{T};
(-10,-25)*=0{};
"t1";"t5" **\dir{-};
"t2";"t5" **\dir{-};
"t3";"t5" **\dir{-};
"t4";"t5" **\dir{-};
"t5";"t6" **\dir{-};
"5";"8" **\dir{-};
"6";"8" **\dir{-};
"7";"8" **\dir{-};
"8";"10" **\dir{-};
"9";"10" **\dir{-};
"10";"11" **\dir{-};
{\ar(20, -10)*{};(40,-10)*{}};
\endxy
\label{innerface}
\end{equation}

Now suppose that $T$ is (a representative of) an object of $\rpd$ and $w$ is a vertex of $T$ with exactly one inner edge
attached to it as in~(\ref{outerface}). It follows that if we remove from $T$ the vertex $w$ and all the outer edges
attached to it, we obtain a new tree $T/w$. There is a face map associated to this operation $\partial_w:T/w\To T$ which is
the inclusion both on the colours and on the generating operations of $\Omega_p(T/w)$. Face maps of this type are called \emph{outer faces} of~$T$.

\begin{equation}
\xy<0.08cm,0cm>:
(-10,0)*=0{}="1";
(10,0)*=0{\bullet}="2";
(0,-10)*=0{\bullet}="3";
(0,-20)*=0{}="4";
(40,10)*=0{}="5";
(50,10)*=0{}="6";
(60,10)*=0{}="7";
(50,0)*=0{\bullet}="8";
(70,0)*=0{\bullet}="9";
(60,-10)*=0{\bullet}="10";
(60,-20)*=0{}="11";
(13,0)*{\scriptstyle u};
(-7,-5)*{\scriptstyle b};
(7,-5)*{\scriptstyle c};
(3,-10)*{\scriptstyle v};
(-2,-15)*{\scriptstyle a};
(41,6)*{\scriptstyle d};
(48,6)*{\scriptstyle e};
(58,6)*{\scriptstyle f};
(53,-5)*{\scriptstyle b};
(67,-5)*{\scriptstyle c};
(63,-10)*{\scriptstyle v};
(58,-15)*{\scriptstyle a};
(73,0)*{\scriptstyle u};
(53,0)*{\scriptstyle w};
(30,-7)*{\partial_w};
(-10,-20)*=0{T/w};
(-10,-25)*=0{};
(52,-20)*=0{T};
"1";"3" **\dir{-};
"2";"3" **\dir{-};
"3";"4" **\dir{-};
"5";"8" **\dir{-};
"6";"8" **\dir{-};
"7";"8" **\dir{-};
"8";"10" **\dir{-};
"9";"10" **\dir{-};
"10";"11" **\dir{-};
{\ar(20, -10)*{};(40,-10)*{}};
\endxy
\label{outerface}
\end{equation}

Note that the possibility of removing the root vertex of $T$ is
included in this definition. This situation can happen only if the
root vertex is attached to exactly one inner edge, thus not every
tree $T$ has an outer face induced by its root. There is another
particular situation which requires special attention, that is the
inclusion of the tree with no vertices, called  the \emph{stump} and
denoted by $\eta$, to a tree with one vertex, called a
\emph{corolla}. In this case we get $n+1$ face maps if the corolla
has $n$ leaves. The operad $\Omega_p(\eta)$ consists of only one
colour and the identity operation on it. Then, a map of operads
$\Omega_p(\eta)\To \Omega_p(T)$ is just a choice of an edge of $T$.

\subsection{Degeneracy maps}
Suppose that $T$ is (a representative of) an object of $\rpd$ and
$v$ is a vertex of $T$ with valence one. Let $a$ be the sole
incoming edge and $b$ the outgoing edge of $v$. We obtain a new tree
$T\backslash v$ by removing $v$ from $T$ and identifying $a$ with
$b$ (in~(\ref{degeneracy}) below we refer to this new edge as
$\epsilon$). There is a map $\sigma_v:T \To T\backslash v$ in $\rpd$
associated to this operation which sends the colours $a$ and $b$ of
$\Omega_p(T)$ to $\epsilon$, sends the generating operation $v$ to
$\id_\epsilon$ and it is the identity for the other colours and
operations. The maps of this type are called the \emph{degeneracies}
of $T$.
\begin{equation}
\xy<0.08cm,0cm>:
(0,20)*{}="1";
(10,20)*=0{}="2";
(20,20)*=0{}="3";
(10,10)*=0{\bullet}="4";
(20,0)*=0{\bullet}="5";
(30,-10)*=0{\bullet}="6";
(40,0)*=0{\bullet}="7";
(30,10)*=0{}="8";
(50,10)*=0{}="9";
(30,-20)*=0{}="10";
(80,10)*=0{}="11";
(90,10)*=0{}="12";
(100,10)*=0{}="13";
(90,0)*=0{\bullet}="14";
(105,-10)*=0{\bullet}="15";
(120,0)*=0{\bullet}="16";
(110,10)*=0{}="17";
(130,10)*=0{}="18";
(105,-20)*=0{}="19";
(13,4)*{\scriptstyle a};
(23,-6)*{\scriptstyle b};
(23,0)*{\scriptstyle v};
(95,-6)*{\scriptstyle \varepsilon};
(65,-7)*{\sigma_v};
"1";"4" **\dir{-};
"2";"4" **\dir{-};
"3";"4" **\dir{-};
"4";"5" **\dir{-};
"5";"6" **\dir{-};
"6";"7" **\dir{-};
"6";"10" **\dir{-};
"7";"8" **\dir{-};
"7";"9" **\dir{-};
"11";"14" **\dir{-};
"12";"14" **\dir{-};
"13";"14" **\dir{-};
"14";"15" **\dir{-};
"15";"16" **\dir{-};
"16";"17" **\dir{-};
"16";"18" **\dir{-};
"15";"19" **\dir{-};
{\ar(52, -10)*{};(78,-10)*{}};
(20,-20)*{T};
(95,-20)*{T\backslash v};
\endxy
\label{degeneracy}
\end{equation}

An important fact about faces and degeneracies is that they generate the maps in $\rpd$. Even more, we have the following
decomposition result, which is also a direct consequence of~\cite[Theorem 2.3.27]{Wei07}.
\begin{lem}\label{unique decomposition}
Every map $f:R\To T$ in $\rpd$ is either the identity or it decomposes uniquely as $f=d\circ s$, where $d$ is a composition
of face
maps and $s$ is a composition of degeneracies.
\end{lem}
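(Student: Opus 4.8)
The plan is to prove the factorization by induction on the number of \emph{non-degenerate} edges or vertices involved, following the standard strategy used for the simplicial category $\Delta$, but adapted to the operadic setting. Since Lemma~\ref{unique decomposition} is stated as a consequence of \cite[Theorem 2.3.27]{Wei07}, I expect the real content to be a careful unwinding of the operad-map description of morphisms in $\rpd$ together with the explicit combinatorial descriptions of faces and degeneracies given above. The two assertions to establish are \emph{existence} (every non-identity $f\colon R\To T$ can be written $f=d\circ s$) and \emph{uniqueness} of such a decomposition.

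First I would analyze an arbitrary coloured operad map $\varphi=\Omega_p(f)\colon \Omega_p(R)\To\Omega_p(T)$ on colours and generating operations. The key observation is that the induced map $\clr(\varphi)\colon E(R)\To E(T)$ on edges records the geometry: two edges of $R$ collapse to the same edge of $T$ exactly when a valence-one vertex between them is sent to an identity, which is precisely the data of a degeneracy. So I would define $s$ to be the composite of degeneracies contracting exactly those valence-one vertices of $R$ that $\varphi$ sends to identities, producing an intermediate tree $R\backslash\{v_1,\dots,v_k\}$ on which the remaining map is injective on colours. One then checks, using the generation of $\rpd$ by faces and degeneracies and the explicit formulas (an inner face sends $u\mapsto v\circ_b w$, an outer face is an inclusion), that the residual injective map factors as a composite $d$ of inner and outer faces. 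The natural bookkeeping device is to induct on $|E(T)|-|E(R)|$ together with the number of identified edges, peeling off one face or one degeneracy at a time.

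The main obstacle I anticipate is \textbf{uniqueness}, and in particular disentangling the interaction between faces and degeneracies so that the factorization cannot be shuffled. The clean way to handle this is to observe that $s$ is forced by $f$: the degeneracies in $s$ must contract precisely the valence-one vertices mapped to identities and no others, since a face map never sends a generating operation to an identity nor collapses two colours. Hence the target $R\backslash\{v_1,\dots,v_k\}$ of $s$ and the map $s$ itself are determined by $\clr(\varphi)$, and then $d$ is determined as the unique factorization of the now colour-injective residual map. I would also need to rule out the degenerate edge cases flagged in the text, namely the stump $\eta$ and the corolla (where a single operad map $\Omega_p(\eta)\To\Omega_p(T)$ is just the choice of an edge, giving the $n+1$ outer faces), and the root outer face, which exists only when the root vertex carries a single inner edge.

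Finally I would assemble these pieces: existence from the peeling-off induction, and uniqueness from the fact that the degeneracy part is rigidly determined by the behaviour of $\varphi$ on colours while the face part is a composite of inclusions and graftings that admits no nontrivial commutation with a degeneracy. A remark I would make is that this specializes correctly to the classical epi-mono factorization in $\Delta$ under the embedding $i\colon\Delta\To\rpd$ of \eqref{embedding of delta}, since there degeneracies and faces are exactly the surjections and injections of $\Delta$; this serves as a useful consistency check and is the reason the normalized functor $N$ can later be defined uniformly.
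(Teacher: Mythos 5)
Your proposal is correct and follows essentially the same route as the paper: factor the induced map on edges (colours) as an epimorphism followed by a monomorphism, peel off degeneracies at the valence-one vertices sent to identities and faces at the skipped edges by an inductive argument, and obtain uniqueness from the fact that both the degeneracy part and the face part of any factorization are forced by the behaviour of $f$ on colours. The only cosmetic difference is the induction parameter ($|E(T)|-|E(R)|$ plus the number of identified edges in your sketch, versus the paper's total vertex count $|V(R)|+|V(T)|$), which does not change the substance of the argument.
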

\begin{proof}
To prove the existence of the decomposition, we proceed by induction
on $n=|V(R)|+|V(T)|$, the total number of vertices of $R$ and $T$.
If $n=0$ then $f:|\To |$ is the identity map and the statement is
obvious. In general, $f$ is a map of coloured operads and a part of
it consists of a map of sets between the colours, i.e., a map
$E(f)\colon E(R)\To E(T)$ between the edges. This map between the
sets of colours has a unique factorization as an epimorphism
followed by a monomorphism
$$
\xymatrix@1{E(R)\ar@{->>}[r]^-{s_0}&X\ \ar@{>->}[r]^-{d_0}&E(T)}.
$$
First, suppose that there exist $r_1\neq r_2\in E(R)$ such that
$s_0(r_1)=s_0(r_2)$. Since $f$ is a map of operads, $r_1$ and $r_2$
must be situated one above the other in a linear branch of $R$:
$$
\xy<0.1cm,0cm>:
(0, 15.5)*{\vdots};
(0, -19.5)*{\vdots};
(2,10)*{\scriptstyle r_1};
(2,-15)*{\scriptstyle r_2};
%(0,18)*=0{}="1";
(0,12)*=0{}="2";
(0,6)*=0{\bullet}="3";
(0,0)*=0{}="4";
(0,-1.7)*=0{\vdots}="5";
(0,-6)*=0{}="6";
(0,-12)*=0{\bullet}="7";
(0,-18)*=0{}="8";
%"1";"2" **\dir{-};
"2";"3" **\dir{-};
"3";"4" **\dir{-};
"6";"7" **\dir{-};
"7";"8" **\dir{-};
\endxy
$$
such that any edge $r$ between them satisfies $s_0(r)=s_0(r_1)=s_0(r_2)$. Hence we can suppose that $r_1$ and $r_2$ are
adjacent, joined by the vertex $v$:
$$
\xy<0.1cm,0cm>:
(0, 9.5)*{\vdots};
(0, -7.5)*{\vdots};
(2,3)*{\scriptstyle r_1};
(2,-3)*{\scriptstyle r_2};
(-2,0)*{\scriptstyle v};
(0,6)*=0{}="1";
(0,0)*=0{\bullet}="2";
(0,-6)*=0{}="3";
"1";"2" **\dir{-};
"2";"3" **\dir{-};
\endxy
$$
It follows that $f$ decomposes as $R\stackrel{\sigma_v}{\twoheadrightarrow} R\backslash v\stackrel{f'}{\rightarrowtail}T$ and by the inductive
hypothesis we already have a decomposition $R\backslash v\twoheadrightarrow S\rightarrowtail T$ of $f'$.

Second, suppose that $s_0$ is bijective, hence we can assume that
$s_0$ is the identity map. If $ d_0$ is also the identity, it
follows that $f$ has to be the identity too. Indeed, since we are
working with non-symmetric operads, $f$ preserves the order of the
incoming edges at every vertex, and if $v\in\Omega_p(R)$ is a
generator (a vertex of $R$) and $f(v)$ is not a generator of
$\Omega_p(T)$ then there would be edges of $T$ without preimage in
$R$.

If $d_0$ is not the identity then let $e\in E(T)$ be an edge skipped by $d_0$. We can distinguish two cases:

If $e$ is an inner edge of $T$, it follows that $f$ decomposes as
$$\xymatrix@1{R\ar[r]^-{f'}&T/e\ \ \ar@{>->}[r]^-{\partial_e}&T}.$$
By the inductive hypothesis we obtain a decomposition of $f'$.

If $e$ is an outer edge of $T$, skipped by $d_0$. Since $f$ is a map
of operads, again any other outer edge adjacent to $e$ has to be
skipped by $d_0$. Denote the vertex adjacent to $e$ by $v$. It
follows that we can again decompose $f$ as
$$
\xymatrix@1{R\ar[r]^-{f'}&T/v\ \ \ar@{>->}[r]^-{\partial_v}&T}
$$
and obtain the desired factorization of $f$ by induction.

To prove the uniqueness of the decomposition we proceed in the
following way. Suppose that there are two factorizations of $f$:
$$
\xymatrix@1{R\ar@{->>}[r]^s&S\ \ar@{>->}[r]^d &T} \ \ \ \mbox{and}\ \ \  \xymatrix@1{R\ar@{->>}[r]^{s'}&S'\
\ar@{>->}[r]^{d'} &T}.
$$
Looking at the decompositions only on the level of the edges, it
follows that $E(S)=E(S')$, $\clr(s)=\clr(s')$ and
$\clr(d)=\clr(d')$. (Here $\clr(-)$ denotes the map at the level of
colours of the associated morphism of coloured operads.) Moreover,
since $s$ is a composition of degeneracies, it follows that if $v$
is a generator of $\Omega_p(R)$ then $s(v)=v$ or $s(v)$ is the
identity on some edge, in which case it is completely determined by
$\clr(s)$. Hence $s=s'$ and also $S=S'$. Similarly, $d$ is also
completely determined by what it does on the colours, thus $d=d'$.
\end{proof}

\begin{rem}
An extension of Lemma~\ref{unique decomposition} also holds, namely that the maps $d$ and $s$ also decompose uniquely into some naturally
ordered sequence of faces and degeneracies, respectively. In Section~\ref{generorders} we indicate how this is done.
\end{rem}

\begin{prop}\label{prop:allmonic}
Let $T$ be a tree in $\rpd$. Then, the faces of $T$ are exactly those injective operad maps $\Omega_p(R)\To\Omega_p(T)$
for which $|V(T)|=|V(R)|+1$ and the degeneracies of $T$ are exactly those surjective operad maps $\Omega_p(T)\To \Omega_p(S)$ for which
$|V(T)|=|V(S)|+1$.
\end{prop}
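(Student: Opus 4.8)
The plan is to prove two set-theoretic equalities: that the set of faces of $T$ coincides with the set of injective operad maps $\Omega_p(R)\To\Omega_p(T)$ with $|V(R)|=|V(T)|-1$, and dually for degeneracies and surjections. One inclusion in each case is essentially already contained in the definitions: inspecting the face maps $\partial_b$ and $\partial_w$ from Sections on face maps, each is manifestly injective on colours and on generating operations, and each deletes exactly one vertex (contracting an inner edge removes the merging of $v,w$ into a single vertex $u$; removing an outer vertex $w$ drops one vertex), so $|V(T)|=|V(T/b)|+1$ and $|V(T)|=|V(T/w)|+1$. Likewise each degeneracy $\sigma_v$ collapses a single valence-one vertex to an identity, hence is surjective with $|V(T)|=|V(T\backslash v)|+1$. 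So faces are injective-with-one-fewer-vertex maps, and degeneracies are surjective-with-one-fewer-vertex maps.

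The substance is the reverse inclusion, and here I would lean directly on Lemma~\ref{unique decomposition}. Given an injective operad map $f\colon\Omega_p(R)\To\Omega_p(T)$ with $|V(R)|=|V(T)|-1$, write its unique factorization $f=d\circ s$ with $s$ a composition of degeneracies and $d$ a composition of faces. Since $f$ is injective and each degeneracy $\sigma_v$ identifies two distinct edges (it is non-injective on colours), the composite $s$ must be the identity; so $f=d$ is a composition of faces. I would then do a vertex count: every face strictly increases the number of vertices by exactly one (by the first paragraph), so a composition of $k$ faces raises the vertex count by $k$. The hypothesis $|V(R)|=|V(T)|-1$ forces $k=1$, i.e.\ $f$ is a single face map. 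Dually, for a surjective $f\colon\Omega_p(T)\To\Omega_p(S)$ with $|V(S)|=|V(T)|-1$, I would argue that in the factorization $f=d\circ s$ the face part $d$ must be trivial — faces are injective-but-not-surjective on colours whenever they skip an edge, and an inner face is non-surjective on operations — so surjectivity of $f$ forces $d=\id$ and $f=s$; then the same vertex count gives that $s$ is a single degeneracy.

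The main obstacle I anticipate is making the two "forces the other factor to be trivial" arguments airtight, since it requires knowing precisely how injectivity/surjectivity at the level of colours interacts with the factorization. Concretely, I must verify that a nontrivial composition of degeneracies is never injective on edges (each $\sigma_v$ genuinely merges $a$ and $b$, and distinct degeneracies merge distinct pairs, so the composite strictly decreases $|E|$) and that a nontrivial composition of faces is never surjective on edges or operations (each $\partial_b$ or $\partial_w$ either omits an edge of $T$ or, for inner faces, fails to hit the generator $u$). These are the facts that let me kill the unwanted factor. Once those monotonicity statements are in hand, the proposition follows immediately from the unique decomposition together with the additivity of the vertex-count change under composition, with no further delicate combinatorics required.
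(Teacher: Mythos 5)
Your proposal is correct and takes essentially the same route as the paper's proof: both apply Lemma~\ref{unique decomposition} to factor the given map as $d\circ s$, observe that a nontrivial composite of degeneracies cannot be injective on colours (dually, a nontrivial composite of faces cannot be surjective), and then use a vertex count to conclude the remaining factor is a single generator. The only cosmetic differences are that the paper organizes this as a proof by contradiction and writes out only the face case, dismissing the degeneracy case as ``similar,'' whereas you argue directly and treat both cases explicitly.
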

\begin{proof} We prove only the assertion for the faces, the other statement can be proved similarly.
Let $f\colon \Omega_p(R)\To \Omega_p(T)$ be an injective operad map with the required property and suppose that it is not a face.
By Lemma~\ref{unique decomposition} we know that $f$ can be decomposed as
\[
\xymatrix@1{R\ar[r]^s &R'\ar[r]^d &T,}
\]
where $s$ is a composite of degeneracies and $d$ is a composite of
faces. Note that $s$ cannot be the identity, since counting the
vertices would imply that $f=d$ is a face in that case. It follows
that $s$, and hence $f$ as well, are not injective on the edges.
This is a contradiction, since an injective operad map has to be
injective on the colours.
\end{proof}

\section{Dendroidal identities}\label{subsec:Dendroidal_identities}
In this section we are going to make explicit the relations between the generating maps (faces and degeneracies) of $\rpd$.
We present the relations in two different ways in this section, since both of these descriptions can be useful when reasoning with generators, as we will see later.

The first description uses the already familiar notation for faces
and degeneracies indexing the maps by edges and vertices of trees.
The second way, described in Section~\ref{generorders} is based on
natural linear orders defined on the set of faces and the set of
degeneracies of a given tree.

These relations, called the dendroidal relations, generalize the
simplicial identities in the category $\Delta$, henceforth we will
call them \emph{dendroidal identities}. The unique epi-mono
factorization theorem for maps in the category $\Delta$ extends to
the category $\rpd$, thus the relations we consider below cover
indeed all the cases: one only has to look at all possible
compositions $f=g_1\circ g_2$ of two generators of $\rpd$ and see
what are the other ways to decompose $f$ into two generators. The
result is summarized in Lemma \ref{dendroidal identities}.

We do not include in our first description the special case involving faces of the $n$-corolla, $n\ge 2$, although a statement similar to Lemma \ref{dendroidal identities} can be given.

There is a little ambiguity in the language that follows. For example $\partial_e$
can refer to two different face maps, but it is always clear from the context which one we are talking about.

\subsection{Elementary face relations}
\label{faceface}
Let $\partial_a:T/a\To T$ and $\partial_b:T/b\To T$ be two inner faces of $T$. It follows that the inner
faces $\partial_a:(T/b)/a\To T/b$ and $\partial_b:(T/a)/b\To T/a$ exist, $(T/a)/b=(T/b)/a$ and that the following diagram
commutes:
$$\xymatrix{(T/a)/b\ar[r]^-{\partial_b}\ar[d]_-{\partial_a} &T/a\ar[d]^-{\partial_a}\\
T/b\ar[r]^-{\partial_b} &T.}$$

Let $\partial_v:T/v\To T$ and $\partial_w:T/w\To T$ be two outer faces of $T$. Then the outer faces
$\partial_w:(T/v)/w\To T/v$ and $\partial_v:(T/w)/v\To T/w$ also exist, $(T/v)/w=(T/w)/v$ and the following diagram
commutes:
$$\xymatrix{(T/v)/w\ar[r]^-{\partial_w}\ar[d]_-{\partial_v} & T/v\ar[d]^-{\partial_v}\\ T/w\ar[r]^-{\partial_w} &T.}$$

The last remaining case is when we compose an inner face with an outer one in any order. There are several
possibilities, in all of them suppose that $\partial_v:T/v\To T$ is an outer face and $\partial_e:T/e\To T$ is an inner
face.
\begin{itemize}
\item[{\rm (i)}] If the inner edge $e$ is not adjacent in $T$ to the vertex $v$, then the outer face $\partial_v:(T/e)/v\To T/e$ and inner
face $\partial_e:(T/v)/e\To T/v$ exist, $(T/e)/v=(T/v)/e$ and the following diagram commutes:
$$\xymatrix{(T/v)/e\ar[r]^-{\partial_e}\ar[d]_-{\partial_v} &T/v\ar[d]^-{\partial_v}\\
T/e\ar[r]^-{\partial_e} &T.}$$
\item[{\rm (ii)}] Suppose that the inner edge $e$ is adjacent in  $T$ to the vertex $v$  and denote the other adjacent vertex to $e$ by
$w$. Following the notation of Section~\ref{the basic category}, $v$
and $w$ contribute to $T/e$ a vertex $v\circ_e w$ or $w\circ_e v$.
Let us denote this vertex  by $z$. Notice that the outer face
$\partial_z:(T/e)/z\To T/e$ exists if and only if the outer face
$\partial_w:(T/v)/w\To T/v$ exists and in this case
$(T/e)/z=(T/v)/w$. Moreover, the following diagram commutes:
$$\xymatrix{(T/v)/w\ar@{=}[r]\ar[d]_-{\partial_w} &(T/e)/z\ar[r]^-{\partial_z} &T/e\ar[d]^-{\partial_e}\\
T/v\ar[rr]^-{\partial_v} &&T.}$$ It follows that we can write
$\partial_v\partial_w=\partial_e\partial_z$ where $z=v\circ_e w$ if
$v$ is `closer' to the root of $T$ or $z=w\circ_e v$ if $w$ is
`closer' to the root of $T$.
\end{itemize}

\subsection{Elementary degeneracy relations}
\label{degdeg}
Let $\sigma_v:T \To T\backslash v$ and $\sigma_w:T\To T\backslash w$ be two degeneracies of $T$. Then the degeneracies
$\sigma_v:T\backslash w\To (T\backslash w)\backslash v$ and $\sigma_w:T\backslash v\To (T\backslash v)\backslash w$ exist,
$(T\backslash v)\backslash w = (T\backslash w)\backslash v$ and the following diagram commutes:
$$\xymatrix{T\ar[r]^-{\sigma_v}\ar[d]_-{\sigma_w} & T\backslash v\ar[d]^-{\sigma_w}\\
T\backslash w\ar[r]^-{\sigma_v} & (T\backslash v)\backslash w.}$$

\subsection{Combined relations}
Let $\sigma_v:T\To T\backslash v$ be a degeneracy and
$\partial:T'\To T$ a face map such that $\sigma_v:T'\To T'\backslash
v$ makes sense (i.e., $T'$ still contains $v$ and its two adjacent
edges as a subtree). Then, there exists an induced face map
$\partial:T'\backslash v\To T\backslash v$, determined by the same
vertex or edge as $\partial:T'\To T$. Moreover, the following
diagram commutes:
\begin{equation}
\xymatrix{T\ar[r]^-{\sigma_v}&T\backslash v\\
T'\ar[u]^-{\partial}\ar[r]^-{\sigma_v} &T'\backslash
v\ar[u]_-{\partial}.} \label{combrel01}
\end{equation}

Let $\sigma_v:T\To T\backslash v$ be a degeneracy and $\partial:T'\To T$ be a face map induced by one of the
adjacent edges to $v$ or the removal of $v$, if that is possible. It follows that $T'=T\backslash v$ and the composition
\begin{equation}
\xymatrix{ T\backslash v\ar[r]^-\partial &T\ar[r]^-{\sigma_v} &T\backslash v}
\label{combrel02}
\end{equation}
is the identity map $\id_{T\backslash v}$.

All these relations between the generators of the maps in $\rpd$ are
summarized in the following lemma whose proof is a direct
consequence of the dendroidal identities above.
\begin{lem}\label{dendroidal identities}
Let $f:R\To T$ be the composite of two generators (faces or
degeneracies) $f=g_1\circ g_2$, where both $R$ and $T$ have at least
one vertex. If $f\neq \id$, then there is exactly one more way to
write $f$ as the composition of two generators $f=g_1'\circ g_2'$,
where $\{g_1,g_2\}\ne\{g_1',g_2'\}$ as sets.  It follows that we
obtain a commutative diagram
$$
\xymatrix{R\ar[r]^{g_2}\ar[d]_{g_2'} &S\ar[d]^{g_1}\\
S'\ar[r]^{g_1'} &T}
$$
which is a special case of one of the diagrams of the dendroidal identities listed above.

If $f=\id$, then $g_1=\sigma_v$ for some vertex $v$ and
$g_2=\partial$ is one of the two possible face maps induced by an
edge adjacent to $v$ (or $v$ itself in some cases).  $\hfill\qed$
\end{lem}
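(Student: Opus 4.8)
The plan is to prove the lemma by a case analysis on the two generators $g_1$ and $g_2$, matching each composite to one of the dendroidal identities already recorded in Sections~\ref{faceface}--\ref{degdeg}. First I would use Lemma~\ref{unique decomposition} to restrict the possibilities. Since a face sends a tree to one with one more vertex and a degeneracy to one with one fewer, comparing $|V(R)|$ and $|V(T)|$ shows that a composite of two generators is of exactly one of three kinds: two faces (when $|V(T)|=|V(R)|+2$), two degeneracies (when $|V(T)|=|V(R)|-2$), or one face and one degeneracy (when $|V(T)|=|V(R)|$). Moreover the unique epi--mono factorization $f=d\circ s$ of Lemma~\ref{unique decomposition} is determined by $f$, so in the mixed case the \emph{canonical} presentation is always the one with the degeneracy first (a composite $d\circ s$), and it is this rigidity that underlies the uniqueness of the alternative.

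For two faces I would invoke the elementary face relations: writing $f=g_1\circ g_2$ with both $g_i$ faces, exactly one of the four listed squares (inner--inner, outer--outer, and the inner--outer cases~(i) and~(ii)) applies, and it provides a second factorization through a different intermediate tree $S'\ne S$. In the inner--inner, outer--outer, and case~(i) situations the alternative performs the same two contractions or vertex-removals in the opposite order, while in case~(ii) it trades the two outer faces $\partial_v,\partial_w$ for the inner face $\partial_e$ followed by the outer face $\partial_z$, so that the sets $\{g_1,g_2\}$ and $\{g_1',g_2'\}$ genuinely differ. The case of two degeneracies is the easiest: the single elementary degeneracy relation commutes $\sigma_v$ and $\sigma_w$ through distinct valence-one vertices, again yielding exactly one reordered factorization. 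For one face and one degeneracy I would use the combined relations: if $f\ne\id$, relation~(\ref{combrel01}) converts the degeneracy-then-face presentation into the (unique) face-then-degeneracy presentation and conversely, supplying the single alternative; and if $f=\id$, the composite must be a face immediately followed by the degeneracy $\sigma_v$ collapsing an adjacent edge, which is precisely relation~(\ref{combrel02}) and yields the second clause of the statement.

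It then remains to confirm the counting, namely that each $f\ne\id$ has \emph{exactly} one alternative factorization and no more. Existence is given case by case by the matching identity; for ``no more'' I would argue that the kind of factorization is fixed by the vertex count and epi--mono data above, and that within each kind the second decomposition is uniquely pinned down by the combinatorial datum (the edge or vertex being contracted, degenerated, or removed). The main obstacle I anticipate is the bookkeeping in the inner--outer case~(ii): here one must check both that the outer face $\partial_z$ of $T/e$ exists exactly when $\partial_w$ does, and, crucially, that the naive outer--outer reordering $\partial_w\circ\partial_v$ does \emph{not} give a spurious third factorization, because $w$ fails to be an outer face of $T$ itself (it acquires a second inner edge in $T$). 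A parallel subtlety is to verify that $f=\id$ can arise only from~(\ref{combrel02}) and from no face--face, degeneracy--degeneracy, or degeneracy-then-face composite, which follows once one observes that a degeneracy-then-face composite always identifies two edges and so is never the identity.
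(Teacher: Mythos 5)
Your proposal is correct and takes essentially the same route as the paper: the paper gives no written argument at all, asserting the lemma as a ``direct consequence'' of the listed dendroidal identities together with the epi--mono factorization of Lemma~\ref{unique decomposition}, and your case analysis (sorting composites by vertex count into face--face, degeneracy--degeneracy, and mixed, matching each to the relevant relation from Sections~\ref{faceface}--\ref{degdeg}, and checking uniqueness of the alternative factorization) is precisely that argument spelled out. Your attention to the two genuine subtleties --- that in the inner--outer case~(ii) the vertex $w$ is not outer-removable in $T$ itself, and that only a face-then-degeneracy composite can equal the identity --- correctly fills the gaps the paper leaves implicit.
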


\subsection{Linear orders on the faces and degeneracies}
\label{generorders}
Since the trees we consider are planar, we can canonically define a linear order on the set of all the faces of any chosen tree.
Similarly, a canonical linear order can be defined on the set of all degeneracies of a tree. We will treat the case of the
corollas separately.
There are a number of different possibilities to start with if one wants to obtain such orders, we choose the following.

Let $T$ be a tree in $\rpd$ such that $|V(T)|\ge 2$. Assign to each face of $T$ a natural number, respecting the following rules:
\begin{itemize}
\item[{\rm (i)}] If the vertex above the root $r\in V(T)$ is outer then assign the number 0 to~$\partial_r$.
\item[{\rm (ii)}] Starting from the root vertex, walk through all the edges and  vertices of $T$ by going always first to the left and upwards. When this is not possible any more, turn back to the closest, already visited vertex and choose the next, not yet covered edge left and upwards.
\item[{\rm (iii)}] Whenever an inner edge or an outer vertex is visited, assign the smallest not yet used natural number to the corresponding face of $T$.
\end{itemize}
Suppose that $T$ has $n$ face maps. The process described above defines a bijection
$$
\phi\colon \{\partial\,\mid\, \partial \textnormal{ is a face of $T$}\,\}\To \{0,1,\ldots,n-1\},
$$
hence also an order on the set of face maps of $T$. We define the $i$-th face of $T$ by $\partial_i:=\phi^{-1}(i)$.
For example, if $T$ is the tree

\begin{equation}\label{eq:tree1}
\xy<0.08cm,0cm>:
(25,-20)*=0{}="1";
(25,-10)*=0{\bullet}="2";
(10,0)*=0{\bullet}="3";
(25,0)*=0{\bullet}="4";
(40,0)*=0{\bullet}="5";
(0,10)*=0{}="6";
(10,10)*=0{\bullet}="7";
(20,10)*=0{}="8";
(25,10)*=0{}="9";
(30,10)*=0{}="10";
(40,10)*=0{}="11";
(50,10)*=0{\bullet}="12";
(0,20)*=0{}="13";
(20,20)*=0{}="14";
(7,9)*{\scriptstyle u};
(22,0)*{\scriptstyle v};
(53,11)*{\scriptstyle w};
(16,-7)*{\scriptstyle d};
(12,5)*{\scriptstyle e};
(27,-5)*{\scriptstyle f};
(34,-7)*{\scriptstyle g};
(47,4)*{\scriptstyle h};
"1";"2" **\dir{-};
"2";"3" **\dir{-};
"2";"4" **\dir{-};
"2";"5" **\dir{-};
"3";"6" **\dir{-};
"3";"7" **\dir{-};
"3";"8" **\dir{-};
"4";"9" **\dir{-};
"5";"10" **\dir{-};
"5";"11" **\dir{-};
"5";"12" **\dir{-};
"7";"13" **\dir{-};
"7";"14" **\dir{-};
\endxy
\end{equation}
then $T$ has eight faces and $\partial_0=\partial_d$, $\partial_1=\partial_e$, $\partial_2=\partial_u$, $\partial_3=\partial_f$, $\partial_4=\partial_v$, $\partial_5=\partial_g$, $\partial_6=\partial_h$ and $\partial_7=\partial_w$.

We can use this convention on traversing the tree $T$ to obtain another bijection
$$
\rho\colon\{\sigma\,\mid\, \sigma \textnormal{ is a degeneracy of $T$}\, \}\To\{0,1,\ldots, m-1\},
$$
provided $T$ has $m\ge 1$ degeneracies. For example, in the case of
the tree (\ref{eq:tree1}) drawn above $m=1$ and $\sigma_0=\sigma_v$.

In case $T$ is the $n$-corolla, the process of traversing $T$ from left to right induces a linear order on the set of faces of $T$ as well. After renaming these faces accordingly, we observe that  $\partial_0$ is the inclusion of the trivial tree $\eta$ into the root of $T$, $\partial_1$ is the inclusion of $\eta$ into the leftmost leaf of $T$, and so on.

\begin{rem}
The linear orders defined above extend the linear orders obtained from the usual numbering of faces and degeneracies in the simplicial category
$\Delta$. Indeed, if $T=L_n$ is the linear tree with $n$ vertices, then $\partial_i$ and $\sigma_i$ defined above correspond to the simplicial ones with the same index.
\end{rem}

One can ask wether the dendroidal identities remain the same as the simplicial ones with respect to the linear orders. This is certainly true for the elementary degeneracy relations. Indeed, after renaming the maps of any commutative diagram with degeneracies as in Section~\ref{degdeg}, the relation becomes $\sigma_j\sigma_i=\sigma_i\sigma_{j+1}$ for some $i\le j$.

On the other hand, the other types of elementary relations do not
remain valid. In the case of combined relations this fails because
there can be fewer degeneracies of a tree than faces. In the case of
elementary face relations, the tree pictured in (\ref{eq:tree1})
provides a counterexample since the relation
$\partial_f\partial_g=\partial_g\partial_f$ translates as
$\partial_3\partial_3=\partial_5\partial_3$. We can obtain other
counterexamples by considering the faces of the $n$-corolla. We
observe that such a situation can occur since some trees $T$ have
the property that the domain $R$ of a face $\partial\colon R\To T$
has two less faces than $T$. In general, there are two possibilities
for the elementary face relations:
$\partial_i\partial_{j-1}=\partial_{j}\partial_i$ or
$\partial_i\partial_{j-2}=\partial_{j}\partial_i$ when $i< j$.

\subsection{A sign convention for faces}
To any face map  $\partial: T\To R$ in $\rpd$ we can associate a sign $\sgn(\partial)\in \{\pm 1\}$. We begin by
numbering the vertices of $R$ from $0$ to $n$, starting with the root-vertex and
traversing the tree by going always first to the left. In this way we obtain a bijection $\sharp:V(R)\To \{0,\ldots,n\}$
(see the next picture for an example).
$$
\xy<0.08cm,0cm>:
(25,0)*{}="1";
(25,10)*=0{\bullet}="2";
(10,20)*=0{\bullet}="3";
(25,20)*=0{\bullet}="4";
(40,20)*=0{\bullet}="5";
(0,30)*=0{}="6";
(10,30)*=0{\bullet}="7";
(20,30)*=0{}="8";
(25,30)*=0{}="9";
(30,30)*=0{}="10";
(40,30)*=0{}="11";
(50,30)*=0{\bullet}="12";
(0,40)*{}="13";
(20,40)*{}="14";
(27,9)*{\scriptstyle 0};
(8,19)*{\scriptstyle 1};
(8,29)*{\scriptstyle 2};
(22,20)*{\scriptstyle 3};
(42,19)*{\scriptstyle 4};
(52,29)*{\scriptstyle 5};
%(48,32)*{\scriptstyle v};
%(16,13)*{\scriptstyle e};
"1";"2" **\dir{-};
"2";"3" **\dir{-};
"2";"4" **\dir{-};
"2";"5" **\dir{-};
"3";"6" **\dir{-};
"3";"7" **\dir{-};
"3";"8" **\dir{-};
"4";"9" **\dir{-};
"5";"10" **\dir{-};
"5";"11" **\dir{-};
"5";"12" **\dir{-};
"7";"13" **\dir{-};
"7";"14" **\dir{-};
\endxy
$$
The sign of a face map $\partial$ is computed by the following rules:
\begin{itemize}
\item[{\rm (i)}] If $\partial$ is an inner face map induced by an edge $e$ and $v$ is the upper vertex adjacent to $e$, then
$\sgn(\partial)=(-1)^{\sharp v}$.
\item[{\rm (ii)}] If $\partial$ is an outer face map induced by the root-vertex $r$, then $\sgn(\partial)=(-1)^{\sharp r}$ which equals $1$.
\item[{\rm (iii)}] If $\partial$ is any other outer face map induced by a vertex $v$, then $\sgn(\partial)=(-1)^{\sharp v+1}$.
\end{itemize}

For example, if $\partial_b$ denotes the inner face map of Figure~\ref{innerface}, then $\sgn(\partial_b)=(-1)^1=-1$. If
$\partial_w$ denotes the outer face of Figure~\ref{outerface}, then $\sgn(\partial_w)=(-1)^2=1$.

There is one exception to these rules in the case of the inclusion of
the tree with no vertices into a corolla:
$$
\xy<0.08cm,0cm>:
(0,0)*{}="1";
(0,10)*=0{}="2";
(50,0)*=0{}="3";
(50,10)*=0{\bullet}="4";
(35,15)*=0{}="5";
(42,20)*{}="6";
(58,20)*{}="7";
(65,15)*=0{}="8";
(20,13)*{\partial};
%(48,5)*{\scriptstyle 0};
%(40,11)*{\scriptstyle 1};
%(60,11)*{\scriptstyle 1};
%(47,17)*{\scriptstyle 1};
%(53,17)*{\scriptstyle 1};
"1";"2" **\dir{-};
"3";"4" **\dir{-};
"4";"5" **\dir{-};
"4";"6" **\dir{-};
"4";"7" **\dir{-};
"4";"8" **\dir{-};
{\ar(10, 10)*{};(30,10)*{}};
(50,15)*{\cdots};
\endxy
$$
In this case, if $\partial$ takes the sole edge of the stump to the root of the corolla then $\sgn(\partial)=1$, otherwise
$\sgn(\partial)=-1$.

The following result is an immediate consequence of the elementary face relations (see Section~\ref{faceface}) and our
way of numbering the vertices.
\begin{lem}\label{decomp}
Let $f$ be a map in $\rpd$ such that it is a composition of two face maps $f=\partial_1\circ\partial_2$.
If $f=\partial'_1\circ\partial'_2$ is the other way to decompose $f$ as a composition of two faces then
$\sgn(\partial_1)\sgn(\partial_2)=-\sgn(\partial_1')\sgn(\partial_2').$
$\hfill\qed$
\end{lem}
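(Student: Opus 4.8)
The plan is to verify Lemma~\ref{decomp} by a case-by-case inspection of the elementary face relations, checking in each case that reindexing the two decompositions of $f$ flips the product of signs. By Lemma~\ref{dendroidal identities} (restricted to the situation where both generators are faces), if $f=\partial_1\circ\partial_2$ is a composite of two faces and is not an identity, then there is exactly one other decomposition $f=\partial_1'\circ\partial_2'$ into two faces, and the resulting square is one of the commutative diagrams listed in Section~\ref{faceface}. So it suffices to run through those three families of diagrams---inner/inner, outer/outer, and the mixed inner/outer relation (cases (i) and (ii))---and compare $\sgn(\partial_1)\sgn(\partial_2)$ with $\sgn(\partial_1')\sgn(\partial_2')$ using the sign rules (i)--(iii) of the sign convention, together with the vertex-numbering bijection $\sharp$.

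First I would treat the inner/inner relation $\partial_a\partial_b=\partial_b\partial_a$ arising from two inner edges $a,b$ of $T$. Let $v_a,v_b$ be the upper vertices adjacent to $a$ and $b$ in $T$, so that the signs of the two inner faces into $T$ are governed by $\sharp v_a$ and $\sharp v_b$. The key observation is that contracting one inner edge changes the vertex count by one, and hence shifts the $\sharp$-numbering of the other adjacent upper vertex by exactly one, depending on the relative position of the two vertices in the traversal order. Tracking this shift through the composite---the sign of a composite is the product of the sign of the outer face into $T$ and the sign of the inner face into the intermediate tree, where the numbering is recomputed---should produce precisely one extra factor of $-1$ when the two decompositions are compared. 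I would carry out the analogous bookkeeping for the outer/outer case and for the mixed case (i), where the edge $e$ and vertex $v$ are non-adjacent.

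The case I expect to be the genuine obstacle is the mixed relation (ii), where the inner edge $e$ is adjacent to the outer vertex $v$, and the two decompositions are $\partial_v\partial_w=\partial_e\partial_z$ with $z=v\circ_e w$ (or $w\circ_e v$). Here one side is a composite of two \emph{outer} faces while the other is an \emph{inner} followed by an \emph{outer} face, so the sign rules (i) and (iii) interact rather than simply shifting a single numbering by one. The subtlety is that the vertex $z$ in $T/e$ inherits a $\sharp$-number that must be compared simultaneously against $\sharp v$ and $\sharp w$ in $T$, and the output rule (iii) contributes a $+1$ in the exponent for outer non-root faces. I would resolve this by fixing the convention that $v$ is closer to the root (the other subcase being symmetric), writing out $\sharp$ explicitly for $v$, $w$ in $T$ and for $z$ in $T/e$, and confirming that the "$+1$" shifts account for exactly one net sign change. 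Once (ii) is verified, the root-face special case (rule (ii), where $\sgn=+1$) and the corolla exception are checked directly against their few possible decompositions, and the remaining cases follow by the same numbering argument, completing the proof.
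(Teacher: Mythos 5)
Your proposal is correct and takes essentially the same approach as the paper: the paper gives no written argument at all, declaring the lemma ``an immediate consequence of the elementary face relations (see Section~\ref{faceface}) and our way of numbering the vertices,'' which is precisely the case-by-case check of the inner/inner, outer/outer, and mixed relations against the sign rules that you outline. Your additional detail --- the observation that passing to $T/a$ or $T/v$ shifts the $\sharp$-numbering by exactly one in exactly one of the two decompositions, and the separate treatment of the adjacent mixed case (ii), the root face, and the corolla exception --- simply makes explicit the verification the paper leaves to the reader.
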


\section{Normal faces}
For any tree $T$ a \emph{maximal linear part} of $T$ is an embedding $L_n\rightarrowtail T$
in $\rpd$ for some $n\ge 1$, such that whenever there is another such embedding $L_m\rightarrowtail T$ for $n\le m$ that fits into a commutative diagram of inclusions
$$
\xymatrix{L_n\ar@{>->}[r]\ar@{>->}[d]&T\\
L_m,\ar@{>->}[ru]}
$$
then $m=n$. We say that a face map $\partial\colon R\To T$ \emph{lives} or \emph{sits on a maximal linear part}
$L_n\rightarrowtail T$ when there exists another embedding $L_{n-1}\rightarrowtail R$ and a commutative diagram
\begin{equation}
\label{maxlinpart}
\xymatrix{L_{n-1}\ar@{>->}[r]\ar[d]_{\partial_i} &R\ar[d]^\partial\\
L_{n}\ar@{>->}[r] & T}
\end{equation}
for some face map $\partial_i:L_{n-1}\longrightarrow  L_n$, where the index $i$ is taken with respect to the order defined in
Section~\ref{generorders}. One can prove that if such a $\partial_i$ exists, then it is unique. Moreover, if $\partial$, $\partial'\colon R\To T$ sit on the same maximal linear part and $\partial_i$ fills diagram~(\ref{maxlinpart}) for both $\partial$ and $\partial'$, then $\partial=\partial'$.

We can observe that there are exactly $n+1$ faces sitting on a maximal linear part $\iota\colon L_n\To T$, and with respect to the linear order defined in Section \ref{generorders} they are the faces $\partial_k, \partial_{k+1},\ldots,\partial_{k+n}$ for some $k$. To underline the similarity between the faces of $[n]$ in the category $\Delta$, and the face maps sitting on a maximal linear part $\iota\colon L_n\To T$, it proves to be convenient to shift their indices so they become $\partial^{(\iota)}_0,\ldots, \partial^{(\iota)}_n$. We will also say that the faces living on
the maximal linear part are \emph{connected}.

\begin{dfn}
A face map $\partial\colon R\longrightarrow T$ in $\rpd$ is \emph{normal} if $\partial$ lives on a maximal linear part
$\iota\colon L_n\longrightarrow T$ for some $n\ge 1$ and $\partial=\partial_i^{(\iota)}$ for some $0\le i<n$ in the associated order.
\end{dfn}

\begin{exa}
Let $T$ and $R$ be the following trees:
$$
\xy<0.08cm,0cm>:
(0,0)*{
\xy
(-10,-5)*{T};
(0,0)*=0{}="1";
(0,8)*=0{\bullet}="2";
(-8,16)*=0{}="3";
(0,16)*=0{\bullet}="4";
(8,16)*=0{}="5";
(0,24)*=0{\bullet}="6";
(0,32)*=0{\bullet}="7";
(-8,40)*=0{}="8";
(8,40)*=0{}="9";
(-2.5,32)*{\scriptstyle v};
(-2.5,8)*{\scriptstyle u};
(1.5,28)*{\scriptstyle g};
(1.5,20)*{\scriptstyle f};
(1.5,12)*{\scriptstyle e};
"1";"2" **\dir{-};
"2";"3" **\dir{-};
"2";"4" **\dir{-};
"2";"5" **\dir{-};
"4";"6" **\dir{-};
"6";"7" **\dir{-};
"7";"8" **\dir{-};
"7";"9" **\dir{-};
\endxy
};
(50,0)*{
\xy
(-10,-5)*{R};
(0,0)*=0{}="1";
(0,8)*=0{\bullet}="2";
(-8,16)*=0{}="3";
(0,16)*=0{\bullet}="4";
(8,16)*=0{}="5";
(0,24)*=0{\bullet}="6";
(0,32)*=0{\bullet}="7";
(0,40)*=0{}="8";
(-2.5,32)*{\scriptstyle q};
(-2.5,8)*{\scriptstyle p};
(1.5,28)*{\scriptstyle c};
(1.5,20)*{\scriptstyle b};
(1.5,12)*{\scriptstyle a};
"1";"2" **\dir{-};
"2";"3" **\dir{-};
"2";"4" **\dir{-};
"2";"5" **\dir{-};
"4";"6" **\dir{-};
"6";"7" **\dir{-};
"7";"8" **\dir{-};
\endxy
};
\endxy
$$
The tree $T$ has one maximal linear part $L_2\To T$ and $R$ has one maximal linear part $L_3\To R$.
The faces of $T$ have the following properties: $\partial_e$ and $\partial_f$ are normal; $\partial_g$, $\partial_u$ and $\partial_v$ are not normal; $\partial_e$, $\partial_f$ and $\partial_g$ are connected to each other. The faces of $R$ have the following properties: $\partial_a$, $\partial_b$ and $\partial_c$ are normal; $\partial_p$ and $\partial_q$ are not normal; $\partial_a$, $\partial_b$, $\partial_c$ and $\partial_q$ are connected to each other.
\end{exa}

In general, if $\partial\colon R\To T$ is a face that lives on a maximal linear part $L_n\To T$ then $\partial$ is connected to
precisely $n$ other faces. Of these $n+1$ faces altogether, $n$ are normal and exactly one is not normal (the last one in
the induced order). A special case is that the face $\partial^{(\iota)}_0\colon L_0\To L_1$ is normal, while $\partial^{(\iota)}_1\colon L_0\To L_1$ is
not.

\begin{rem}
An arbitrary choice is made here about which faces to treat as
normal (i.e., exclude the case $i=n$). But we could have excluded
the case $i=0$ instead. For the general theory this does not make a
difference since if one makes the other choice then all the results
remain true with the obvious changes in the proofs.
\end{rem}

\section{Dendroidal complexes}
In this section we introduce the category of dendroidal complexes
$\dCh$. This category extends the category of chain complexes of
abelian groups and, as we will see in the next section, it is
equivalent to the category of dendroidal abelian groups $\dAb$,
i.e., the category of functors from $\rpd^{{\rm op}}$ to abelian
groups. If $A$ is a dendroidal abelian group and $f\colon
R\longrightarrow T$ is any map in $\rpd$, then the associated group
homomorphism $A_T\longrightarrow A_R$ is denoted by $f^*$.

We say that an abelian group $A$ is an $\rpd$-graded abelian group if $A=\bigoplus_{T\in \rpd} A_T$.
\begin{dfn}
A \emph{dendroidal complex} $A$ is an $\rpd$-graded abelian group together with structure maps given by group
homomorphisms $\delta^\sharp\colon A_T\To A_R$, for every face map $\delta\colon R\To T$, satisfying the
following two conditions:
\begin{itemize}
\item[{\rm (i)}] If $\delta$ is a normal face, then $\delta^\sharp=0$.
\item[{\rm (ii)}] For any commutative diagram of elementary face relations
$$
\xymatrix{
S\ar[r]^\partial\ar[d]_\delta & R \ar[d]^{\partial_1} \\
R'\ar[r]^{\delta_1} & T }
$$
the associated diagram
\[
\xymatrix{
A_S & A_{R} \ar[l]_{\partial^\sharp} \\
A_{R'}\ar[u]^{\delta^\sharp} & A_T\ar[l]_{\delta_1^\sharp}\ar[u]_{\partial_1^\sharp} }
\]
anticommutes, i.e., $\delta^{\sharp}\delta_1^{\sharp}=-\partial^{\sharp}\partial_1^{\sharp}$.
\end{itemize}
\end{dfn}

A map $\varphi\colon A\longrightarrow B$ between dendroidal complexes is given by a sequence of maps $\varphi_T\colon A_T\longrightarrow B_T$
compatible with the group homomorphisms. We denote the category of dendroidal complexes by $d\Ch$. Note
that if in the definition of a dendroidal complex we replace the category $\rpd$ by its full subcategory $\Delta$,
then we recover the notion of a chain complex. In fact, if we denote by $\Ch$ the category of chain complexes, then there is a pair of
adjoint functors
\begin{equation}
\xymatrix{
j_!\colon \Ch \ar@<3pt>[r] & \ar@<3pt>[l] \dCh\colon j^*
}
\label{dChadj}
\end{equation}
where $j^*$ is the restriction functor, i.e., $j^*(A)_n=A_{L_n}$. Its left adjoint $j_!$ is `extension by zero' and sends a chain complex
$A$ to the dendroidal complex
$$
j_!(A)_T=\left\{
\begin{array}{cc}
A_n & \mbox{ if $T\cong L_n$}, \\
\emptyset & \mbox{ otherwise}.
\end{array}
\right.
$$

\subsection{The Moore dendroidal complex}
Let $A$ be a dendroidal abelian group. One can define a dendroidal complex $MA$ associated to $A$ by
setting $(MA)_T=A_T$ for every $T\in \rpd$. For any face $\delta\colon R\To T$ in $\rpd$, the structure map $\delta^\sharp$ is defined as
follows:
\begin{itemize}
\item[{\rm (i)}] If $\delta$ is a normal face, then $\delta^\sharp=0$.
\item[{\rm (ii)}] If $\delta$ is not a normal face and it is not connected to any normal face, then $\delta^\sharp=\sgn{(\delta)}\cdot\delta^*$.
\item[{\rm (iii)}] In the remaining case $\delta=\partial^{(\iota)}_n$ for a maximal linear part
$L_n\stackrel{\iota}{\longrightarrow} T$ in the induced order. Define in this case
\[
\delta^\sharp=\sum_{i=0}^n\sgn(\partial_i)\cdot\partial_i^*.
\]
\end{itemize}
\begin{lem}
The $\rpd$-graded abelian group $MA$ defined above is in fact a dendroidal complex.
\end{lem}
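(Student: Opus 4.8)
The plan is to check directly the two defining conditions of a dendroidal complex for $MA$. Condition~(i) requires nothing to prove: case~(i) of the construction \emph{defines} $\delta^\sharp=0$ on every normal face. All the content lies in condition~(ii), namely the anticommutativity of the square of structure maps attached to each elementary face relation. By the enumeration in Section~\ref{faceface}, every such relation is of inner--inner, outer--outer, disjoint inner--outer, or adjacent inner--outer type; only face--face squares occur here, so degeneracy and combined relations do not enter. I would fix a square, written as in condition~(ii) with common composite $f=\partial_1\partial=\delta_1\delta$, classify each of its four faces by the trichotomy (i)/(ii)/(iii) of the construction, and compute the two composites $\delta^\sharp\delta_1^\sharp$ and $\partial^\sharp\partial_1^\sharp$ as maps $A_T\to A_S$, with the signs governed by Lemma~\ref{decomp}.

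First I would settle the squares in which no face is of the summation type~(iii), so that every nonzero structure map has the simple form $\gamma^\sharp=\sgn(\gamma)\,\gamma^*$. If one of $\delta,\delta_1$ is normal then $\delta^\sharp\delta_1^\sharp=0$, and a direct inspection of the relevant relation shows that the opposite composite $\partial^\sharp\partial_1^\sharp$ vanishes as well, since in the absence of summation faces one of $\partial,\partial_1$ is then forced to be normal (normality and connectedness are dictated by the local linear structure around the contracted edge or removed vertex, which the relation respects). When none of the four faces is normal, contravariant functoriality of $A$ gives $\delta^*\delta_1^*=f^*=\partial^*\partial_1^*$, so that $\delta^\sharp\delta_1^\sharp=\sgn(\delta)\sgn(\delta_1)\,f^*$ and $\partial^\sharp\partial_1^\sharp=\sgn(\partial)\sgn(\partial_1)\,f^*$; since Lemma~\ref{decomp} yields $\sgn(\delta)\sgn(\delta_1)=-\sgn(\partial)\sgn(\partial_1)$, the square anticommutes.

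The remaining, genuinely delicate, case is when a summation face $\partial^{(\iota)}_n$, the top face of a maximal linear part $\iota\colon L_n\to T$, occurs among the four. Here I would expand $\bigl(\partial^{(\iota)}_n\bigr)^\sharp=\sum_{i=0}^n\sgn\bigl(\partial^{(\iota)}_i\bigr)\,\bigl(\partial^{(\iota)}_i\bigr)^*$, which is well-typed because all $n+1$ connected faces share the same domain $R$ (contracting any inner edge of the linear part, or removing either of its endpoint vertices, produces the same shortened tree), and substitute it into the composite. The hard part will be to show that this alternating sum, composed with the adjacent structure map, reorganizes so as to cancel the opposite side of the square: terms whose combined face data factor through a normal face drop out by condition~(i), the surviving terms pair off through the face relations internal to $L_n$, and their signs are reconciled using Lemma~\ref{decomp} together with the sign convention for faces. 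The worst subcase, in which both maps into $T$ are summation faces lying on distinct maximal linear parts and both composites are double sums, is handled in the same fashion, the two linear directions being independent. Finally I would dispatch separately the corolla exception to the sign convention, verifying the single relation among the stump inclusions by hand.
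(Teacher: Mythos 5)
Your proposal takes essentially the same route as the paper's own proof: condition (i) holds by definition, and condition (ii) is checked by classifying the four faces of each elementary face-relation square into the normal / simple / summation trichotomy, then using contravariant functoriality together with Lemma~\ref{decomp} for the sign flip in the simple cases, and an expansion-and-cancellation argument modelled on the simplicial $d^2=0$ computation when summation faces occur (including the case where both composable faces are summation type and the case of summation faces on distinct maximal linear parts). The differences are organizational only, so this matches the paper's argument.
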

\begin{proof} Suppose that
\[
\xymatrix{
S\ar[r]^{\delta'}\ar[d]_\delta & R \ar[d]^{\delta'_1} \\
R'\ar[r]^{\delta_1} & T }
\]
is a commutative diagram of elementary face relations. There are several cases to distinguish according to the type of faces involved.

If none of the four faces are normal or connected to a normal face
then Lemma~\ref{decomp} ensures that the induced square
anticommutes.

If each of the sets $\{\delta,\delta_1\}$, $\{\delta', \delta'_1\}$ contains at least one normal face then the induced square anticommutes trivially, since at least one of the induced maps in each set is the zero map.

In case the set $\{\delta',\delta'_1\}$ contains a normal face while $\{\delta, \delta_1\}$ does not contain any, one has to prove that $\delta^\sharp\delta_1^\sharp=0$. First we observe that if $\delta'$ is normal, then $\delta_1$ is also normal, hence we can assume that
$\delta_1'$ is the only normal face in the diagram. Moreover, it also follows that $\delta_1'=\partial_{n-1}$ in the order induced by a maximal linear part $L_n\To T$ and, since none of the elements of $\{\delta,\delta_1\}$ can be normal faces, $\delta'$ is connected to $\delta_1'$, and both $\delta$ and $\delta_1$ are connected to normal faces. Hence
\[
\delta^\sharp=\sum_{i=0}^n\sgn(\partial_i)\cdot\partial_i^*\quad\mbox{and}\quad
\delta_1^\sharp=\sum_{i=0}^{n+1}\sgn((\partial_1)_i)\cdot(\partial_1)_i^*.
\]
We now conclude that $\delta^\sharp\delta_1^\sharp=0$ by following the
proof for the simplicial case, when one proves the differential
property $d^2=0$ for the Moore chain complex associated to a
simplicial abelian group.

The remaining case, when none of the four faces are normal but some of them are connected to normal ones, brakes down into the following three cases.
\begin{itemize}
\item[{\rm (i)}] When both of $\delta'$ and $\delta_1'$ are not normal, but connected to normal faces, they have to live on different maximal linear parts of the tree $T$. Hence $\delta$ and $\delta_1$ are not  normal, but connected to normal faces. Moreover, each summand in the definition of $ \delta^\sharp$ and $\delta'^\sharp$ fits into a commutative diagram
\[
\xymatrix{A_S &A_R\ar[l]_-{(\partial')_j^*}\\
A_{R'}\ar[u]^{\partial_i^*} &A_T\ar[l]^-{(\partial_1)_j^*}\ar[u]_{(\partial_1')_i^*}}
\]
hence Lemma \ref{decomp} ensures that the required square anticommutes.
\item[{\rm (ii)}] Suppose that $\delta_1'$ is connected to normal faces, while $\delta'$ is not. We analyze the case when $\delta'$ is `adjacent'
to $\delta_1'$ (the other cases are easier and we omit them). This
situation can typically be illustrated when $T$ is a tree of the
form
\[
\xy<0.08cm,0cm>:
(0,0)*=0{}="1";
(0,8)*=0{\bullet}="2";
(-8,16)*=0{}="3";
(0,16)*=0{\bullet}="4";
(8,16)*=0{}="5";
(0,24)*=0{\bullet}="6";
(0,32)*=0{\bullet}="7";
(-8,40)*=0{}="8";
(8,40)*=0{}="9";
(-2.5,32)*{\scriptstyle v};
(1.5,28)*{\scriptstyle e};
"1";"2" **\dir{-};
"2";"3" **\dir{-};
"2";"4" **\dir{-};
"2";"5" **\dir{-};
"4";"6" **\dir{-};
"6";"7" **\dir{-};
"7";"8" **\dir{-};
"7";"9" **\dir{-};
\endxy
\]
and $\delta_1'=\partial_e$, $\delta_1=\partial_v$. Then it is clear what the other trees and face maps are in the diagram, and we see that $\delta\colon S\To R'$ is connected to normal faces, but is not normal. Again, a summand $(\partial_1')^*_i$ of $(\delta_1')^\sharp$ will correspond to the summand $\partial_i^*$ of $\delta^\sharp$ such that the associated diagram
\[
\xymatrix{A_S &A_R\ar[l]_-{(\delta')^*}\\
A_{R'}\ar[u]^{\partial_i^*} &A_T\ar[l]^-{(\delta_1)^*}\ar[u]_{(\partial_1')_i^*}}
\]
commutes for every $i$. We conclude that the required diagram is anticommutative.
\item[{\rm (iii)}] The remaining case, when $\delta'$ is connected to normal faces and $\delta_1'$ is not, is symmetric to case (ii).
\end{itemize}
This exhausts all the possible combinations of faces and completes the proof.
\end{proof}
The dendroidal complex $MA$ is called the \emph{Moore complex} associated to $A$.

\subsection{The normalized and degenerate dendroidal subcomplexes}
Let $A$ be a dendroidal abelian group. We can construct a subcomplex
$NA$ of the Moore complex $MA$ by setting
\[
(NA)_T=\cap_{\tilde\partial}\ker(\tilde\partial^*)\leq A_T,
\]
where $\tilde\partial$ runs through all normal faces with codomain
$T$. Note that if $T$ has no normal faces, then we have an empty
intersection and in that case we set $(NA)_T=A_T$. We can restrict
the structure maps of $MA$ to get a dendroidal complex structure on
$NA$. The dendroidal complex $NA$ is called the \emph{normalized
complex} associated to $A$.

Another  subcomplex $DA$ of $MA$ is defined by
\[
(DA)_T=\!\!\!\!\!\sum_{\sigma\colon T\rightarrow S}\!\!\!\!\sigma^*(A_S)\leq A_T,
\]
where $\sigma$ runs through all degeneracies with domain $T$. Again, the structure maps of $MA$
restrict to $DA$, thus we obtain a dendroidal subcomplex of $MA$ which is called the \emph{degenerate
complex} associated to $A$.

\begin{prop}
The $\rpd$-graded abelian groups $NA$ and $DA$ are dendroidal subcomplexes of $MA$.
\end{prop}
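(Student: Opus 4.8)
The plan is to verify the two defining conditions of a dendroidal complex for $NA$ and for $DA$. The first observation is that each is a graded subgroup of $MA$, since $(NA)_T$ is an intersection of kernels and $(DA)_T$ a sum of images, hence subgroups of $A_T$. Conditions (i) and (ii) in the definition of a dendroidal complex are then inherited automatically from $MA$ once we know that the structure maps of $MA$ carry $NA$ into $NA$ and $DA$ into $DA$. Thus the entire content is to show that for every face map $\delta\colon R\To T$ one has $\delta^\sharp((NA)_T)\subseteq (NA)_R$ and $\delta^\sharp((DA)_T)\subseteq (DA)_R$.

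For $NA$ I would first simplify the structure map on the subgroup. If $\delta$ is normal then $\delta^\sharp=0$ and there is nothing to prove. If $\delta$ is not normal, then on an element $x\in (NA)_T$ the sum appearing in case (iii) of the definition of $MA$ collapses: every summand except $\partial_n^{(\iota)}=\delta$ is a normal face and therefore kills $x$, so that $\delta^\sharp x=\sgn(\delta)\,\delta^* x$ in both cases (ii) and (iii). It then suffices to prove $\delta^* x\in (NA)_R$, that is, $\tilde\partial^*\delta^* x=(\delta\tilde\partial)^* x=0$ for every normal face $\tilde\partial\colon R'\To R$. Here I would apply Lemma~\ref{dendroidal identities} to the composite of the two faces $\delta\tilde\partial$, obtaining the unique second decomposition $\delta\tilde\partial=\delta'\tilde\partial'$ whose top face $\delta'$ has codomain $T$, and argue that $\delta'$ is again normal; then $\delta'^* x=0$ and hence $(\delta\tilde\partial)^* x=\tilde\partial'^*\delta'^* x=0$. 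This is exactly the dendroidal analogue of the simplicial identity $d_j d_n=d_{n-1}d_j$ used to prove that the normalized chain complex is a subcomplex.

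The combinatorial claim that swapping a normal face past a non-normal one produces a normal top face is the main obstacle, and I expect it to require a case analysis following the list of elementary face relations in Section~\ref{faceface}: one treats the inner--inner, outer--outer, and the two inner--outer configurations separately, in each case reading off the second decomposition and checking, from the characterization of normal faces in terms of maximal linear parts, that its top face sits on a maximal linear part of $T$ at an index strictly below the maximal one, hence is normal. The boundary situations involving the root vertex and the faces of a corolla must be inspected separately.

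For $DA$ the argument follows the simplicial proof that the degenerate subcomplex is closed under the differential. It suffices to treat a generator $\sigma^*a\in (DA)_T$, with $\sigma\colon T\To S$ a degeneracy and $a\in A_S$, and to compute $\delta^\sharp(\sigma^*a)=\sum_i \sgn(\partial_i)\,(\sigma\partial_i)^* a$ (a single term in case (ii)). For each $i$ I would factor $\sigma\partial_i$ uniquely as $d\circ s$ using Lemma~\ref{unique decomposition}. Whenever $s$ is not the identity the corresponding term lies in $(DA)_R$, since its pullback factors through a genuine degeneracy with domain $R$. By the combined relation~(\ref{combrel02}), the only indices for which $\sigma\partial_i=\id$ are those faces induced by the edges adjacent to the contracted vertex $v$; on the relevant maximal linear part these occur with consecutive indices and, by the sign convention together with Lemma~\ref{decomp}, with opposite signs, so they cancel. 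Hence $\delta^\sharp(\sigma^*a)\in (DA)_R$, and the only remaining subtlety is the verification of this sign cancellation.
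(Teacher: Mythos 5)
Your proposal is correct and takes essentially the same route as the paper's proof: for $NA$ both arguments reduce, via Lemma~\ref{dendroidal identities}, to the combinatorial claim that commuting a normal face past the given face yields a normal face with codomain $T$ (a claim the paper itself only asserts as ``easy to check'', so your deferring it to a case analysis on the elementary face relations is no loss of rigour relative to the paper), and for $DA$ both split $\delta^\sharp(\sigma^*a)$ into terms absorbed via relation~(\ref{combrel01}) and pairs of identity terms from relation~(\ref{combrel02}) cancelling by opposite signs. The only genuine difference is a mild streamlining on your side: you first collapse the case-(iii) sum on elements of $(NA)_T$, so that the commuting claim is needed only for the single non-normal face $\delta=\partial_n$, whereas the paper keeps the full sum $\sum_i\sgn(\partial_i)\partial_i^*$ and applies the claim to every face $\partial_i$ sitting on the maximal linear part.
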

\begin{proof}
We first consider the case of $NA$. Let $\delta\colon R\To T$ be a face map in $\rpd$. We need to prove that
every $x\in (NA)_T$ satisfies $\delta^\sharp(x)\in (NA)_R$. There are three cases to distinguish:
\begin{itemize}
\item[{\rm (i)}] If $\delta$ is a normal face, then $\delta^\sharp(x)=0\in (NA)_R$.
\item[{\rm (ii)}] If $\delta$ is neither normal nor connected to a normal face, then
$\delta^\sharp=\sgn(\delta)\cdot\delta^*$. Suppose that $\partial\colon S\To R$ is a normal face. There is a
commutative diagram of elementary face relations
\[
\xymatrix{S\ar[r]^-{\partial}\ar[d]_{\tilde\gamma} &R\ar[d]^\delta\\
R'\ar[r]^{\gamma} &T}
\]
by Lemma~\ref{dendroidal identities}. It is easy to check that in such a case whenever $\partial$ is normal, $\gamma$
is normal as well. We conclude that
\[
\partial^*\delta^*(x)=\tilde\gamma^*\gamma^*(x)=0
\]
and thus $\partial^*\delta^\sharp(x)=0$.
\item[{\rm (iii)}] In the remaining case, $\delta=\partial_n$ for some maximal linear part $\iota\colon L_n\To T$. Therefore
\[
\delta^\sharp=\sum_{i=0}^n\sgn(\partial_i)\cdot\partial_i^*.
\]
Again let $\partial\colon S\To R$ be a normal face. In the same way as in case (ii), for every $i$ we have
$\partial_i\partial=\gamma_i\tilde\gamma_i$ for some normal face $\gamma_i$. Hence every summand of
$\partial^*\delta^\sharp$ vanishes on $x$.
\end{itemize}
Next, we prove that $DA$ is a subcomplex. Suppose that $\delta\colon R\To T$ is a face map and let $x\in (DA)_T$ such that
$x=\sigma_1^*(x_1) +\ldots+\sigma_k^*(x_k)$ for some degeneracies $\sigma_j\colon T\To S_j$ and $x_j\in A_{S_j}$. There are
again three cases to distinguish:
\begin{itemize}
\item[{\rm (i)}] If $\delta$ is a normal face, then $\delta^\sharp(x)=0\in (DA)_R$ as before.
\item[{\rm (ii)}]If $\delta$ is neither normal nor connected to a normal face, then
there exists a commutative diagram of combined dendroidal relations
\[
\xymatrix{
T\ar[r]^{\sigma_j}&S_j\\
R\ar[r]^{\sigma_j'}\ar[u]^{\delta} &T_j\ar[u]_{\delta_j} }
\]
for every $j$ (otherwise $\delta$ would be a section of a degeneracy, hence normal or connected to a normal face). In this case
\[
\delta^*\sigma_j^*(x_i)=\sigma_j'^*\delta_j^*(x_j)\in (DA)_R
\]
for every $j$, thus $\delta^\sharp(x)\in (DA)_R$.
\item[{\rm (iii)}] In the remaining case, $\delta^\sharp=\sum_{i=0}^n\sgn(\partial_i)\cdot\partial_i^*$ where $\partial_0,\ldots,\partial_{n-1}$ are normal faces sitting on the same maximal linear part $\iota\colon L_n\To T$ where the indices come from the induced order, and $\delta=\partial_n$ in this order. It follows that
\[
\delta^\sharp(x)=\sum_{i,j}\sgn(\partial_i)\partial_i^*\sigma_j^*(x_j).
\]
This sum can be divided into two parts. The first part consists of
those components for which $\sigma_j\partial_i$ satisfies the
combined dendroidal relation~(\ref{combrel01}), i.e.,
$\sigma_j\partial_i=\partial'_i\sigma'_j$ for some face
$\partial'_i$ and degeneracy $\sigma'_j$. This part of the sum is
clearly in $(DA)_R$. The second part consists of those summands for
which $\sigma_j\partial_i$ satisfies the combined dendroidal
relation~(\ref{combrel02}) of the second type, that is
$\sigma_j\partial_i=\sigma_j\partial'_i=\id_R$ for some face
$\partial_i'$. But in such a case
$\sgn(\partial_i')=-\sgn(\partial_i)$ and one can form such pairs
from the components of this part of the sum cancelling each other.
\end{itemize}
\end{proof}

The Moore dendroidal complex associated to a dendroidal abelian group splits as a direct sum of the normalized part and the degenerate part. The approach is similar to the one appearing in \cite{weibel}, which establishes the same property for the classical Moore complex of a simplicial abelian group. We need to show that $A_T=(NA)_T\oplus (DA)_T$ for every tree $T\in\rpd$.

\begin{lem}\label{direct sum 1}
For any dendroidal abelian group $A$, the dendroidal complexes $NA$ and $DA$ satisfy that $(NA)_T\cap (DA)_T={0}$ for every tree $T\in\rpd$.
\end{lem}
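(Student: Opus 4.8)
The plan is to mimic the classical argument that the normalized and degenerate parts of a Moore complex intersect trivially, as in \cite{weibel}, but carried out one tree at a time using the dendroidal relations available in the excerpt. Fix a tree $T$ and take an element $x\in (NA)_T\cap (DA)_T$. By definition $x$ is annihilated by $\tilde\partial^*$ for every normal face $\tilde\partial$ with codomain $T$, and simultaneously $x=\sum_j \sigma_j^*(x_j)$ is a sum of elements pulled back along degeneracies $\sigma_j\colon T\To S_j$. My goal is to show any such $x$ must vanish, and I expect to do this by induction on a suitable measure of how degenerate the representation of $x$ is.

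First I would introduce an induction on the number of vertices of $T$, or more finely on the number of degeneracies contributing to the expression $x=\sum_j\sigma_j^*(x_j)$; the base case is a tree with no degeneracies, where $(DA)_T=0$ and the claim is immediate. For the inductive step, I would pick a degeneracy $\sigma=\sigma_v$ appearing in the sum and exploit the combined relation~(\ref{combrel02}): a degeneracy $\sigma_v\colon T\To T\backslash v$ has a section given by a face map $\partial\colon T\backslash v\To T$ induced by an edge adjacent to $v$, so that $\sigma_v\partial=\id$. Crucially, this section $\partial$ is exactly a face of $T$ that is connected to a degeneracy, hence (by the discussion preceding the definition of normal faces) it is either normal or its ``partner'' normal face is controlled; applying an appropriate $\tilde\partial^*$ to the relation lets me isolate the $x_j$ corresponding to $\sigma_v$ and conclude it lies in a lower-complexity situation.

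The technical heart will be bookkeeping the interaction between the normality conditions defining $(NA)_T$ and the degeneracy expressions defining $(DA)_T$. Concretely, I would want to show that for each degeneracy $\sigma_j$ there is a normal face $\tilde\partial_j$ serving as a section, so that $\tilde\partial_j^*\sigma_j^*=\id$ on the relevant summand while $\tilde\partial_j^*$ kills the contributions of the other degeneracies via the combined relation~(\ref{combrel01}) (which expresses $\sigma_i\tilde\partial_j$ as $\tilde\partial_j'\sigma_i'$, a face followed by a degeneracy). Applying $\tilde\partial_j^*$ to $x$ then yields $0$ on the left (since $x\in (NA)_T$) and $x_j$ plus lower-order degenerate terms on the right, forcing each $x_j$ into the normalized-degenerate intersection of a smaller tree. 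This is the step that requires the most care, since I must verify that the section face is genuinely normal and that the crossed terms really do land in the image of a degeneracy so the induction can be applied.

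Once each $x_j$ is shown to lie in an intersection $(NA)_{S_j}\cap (DA)_{S_j}$ over a tree with strictly fewer vertices (or fewer degeneracies), the inductive hypothesis gives $x_j=0$, and hence $x=\sum_j\sigma_j^*(x_j)=0$. I would close by remarking that this is precisely the dendroidal analogue of the splitting argument in the simplicial setting, with the simplicial identities replaced by the combined dendroidal relations~(\ref{combrel01}) and~(\ref{combrel02}), and with the role of the degeneracy sections played by the normal faces adjacent to each degenerate vertex. The main obstacle, as noted, is ensuring the bookkeeping of sections and crossed terms respects the normality constraints; the rest is a routine descending induction.
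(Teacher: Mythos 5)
Your opening moves are the right ones (normal sections of degeneracies, the combined relations~(\ref{combrel01}) and~(\ref{combrel02}) for the cross terms), but the endgame has a genuine gap. From
\[
0=\tilde\partial_j^*(x)=x_j+\sum_{i\neq j}(\sigma_i\tilde\partial_j)^*(x_i),
\]
rewriting each cross term via~(\ref{combrel01}) as $(\sigma_i\tilde\partial_j)^*(x_i)=\sigma'^*\partial'^*(x_i)\in (DA)_{S_j}$, all you learn is that $x_j\in(DA)_{S_j}$. Nothing places $x_j$ in $(NA)_{S_j}$: the $x_j$ are arbitrary elements of $A_{S_j}$, not annihilated by the normal faces of $S_j$, so the inductive hypothesis $(NA)_{S_j}\cap(DA)_{S_j}=0$ does not apply to them, and the conclusion $x_j=0$ does not follow. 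Indeed, termwise vanishing is false in general: for $T=L_2$ the elementary degeneracy relation gives $\sigma\sigma_v=\sigma\sigma_w$, where $\sigma_v,\sigma_w$ are the two degeneracies of $L_2$ and $\sigma\colon L_1\To L_0$ is the unique degeneracy of $L_1$; hence $\sigma_v^*(y)-\sigma_w^*(y)=0$ for the nonzero element $y=\sigma^*(z)$, $z\neq 0$ (note $\sigma^*$ is split injective). So a sum of degeneracy pullbacks can vanish with all coefficients nonzero, and any correct argument must fix a representation of $x$ with a \emph{minimal} number of summands and aim at a contradiction, rather than at showing every summand vanishes.

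There is also a secondary issue which you flag but do not resolve: your cross-term analysis needs $\sigma_i\tilde\partial_j\neq\id$ for all $i\neq j$, yet a single face can be a section of two different degeneracies (already in $L_2$, the inner face $L_1\To L_2$ splits both $\sigma_v$ and $\sigma_w$); when $\sigma_i\tilde\partial_j=\id$ the cross term equals $x_i$ and is not visibly degenerate. The paper's proof of Lemma~\ref{direct sum 1} repairs both defects simultaneously: taking the number $k$ of summands minimal, it chooses $\sigma_1$ so that no other degenerate vertex on the linear component of $v_1$ lies below $v_1$, and takes $\partial$ to be the normal section of $\sigma_1$ determined by the edge below $v_1$ (or by cutting $v_1$), which forces $\sigma_i\partial\neq\id$ for $i>1$. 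Then, instead of trying to prove $x_1=0$, it substitutes $x_1=-\sum_{i>1}(\sigma_i\partial)^*(x_i)$ back into $x$ and uses the combined relation together with the elementary degeneracy relation to rewrite each $(\sigma_i\partial\sigma_1)^*(x_i)$ in the form $\sigma_i^*(y_i)$, producing a representation of $x$ with $k-1$ summands and contradicting minimality. Replacing your tree-by-tree induction with this minimal-$k$ contradiction is exactly what your argument needs in order to close.
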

\begin{proof}
Suppose that $0\neq x\in (NA)_T\cap(DA)_T$ and write $x$ as a finite sum of degeneracies
\[
x=\sigma_1^*(x_1)+\cdots+\sigma_k^*(x_k),
\]
such that the number of the summands is minimal. If $k=1$ then $\sigma_1\colon T\To S$ has two right inverses in $\rpd$ and at
least one of them, say $\partial\colon S\To T$, is a normal face. It follows that $0=\partial^*(x)=(\sigma_1\partial)^*(x_1)=x_1$
which contradicts $x\neq 0$.

If $k>1$, we can use a similar argument.  Since $k$ is minimal, $\sigma_i\neq \sigma_j$ for every $i\ne j$, hence $\sigma_i$ and $\sigma_j$ are induced by univalent vertices
$v_i\neq v_j$. We can suppose that $\sigma_1$ sits on a linear part $L_n\To T$ and that all the other $\sigma_i$ are on a different linear part or, if on the same one, that they come after $\sigma_1$ in the induced order. In other words, none of those vertices $v_2,\ldots,v_k$ which are on the linear component of $v_1$ sit below $v_1$. Let $\partial$ be the normal right inverse to $\sigma_k$ induced by the edge below $v_1$ or by cutting $v_1$. Then
\[
\partial^*(x)-\big(\sigma_2\partial)^*(x_2)+\cdots+(\sigma_{k}\partial)^*(x_{k})\big)=x_1,
\]
\[
x=\sigma^*_2(x_2)+\cdots+\sigma^*_{k}(x_{k})-\big((\sigma_2\partial\sigma_1)^*(x_2)+\cdots+(\sigma_{k}
\partial\sigma_1)^*(x_{k})\big).
\]
Let us look at the composite $\sigma_i\partial\sigma_1$ for all $i>1$ and write it in another form with the help of the dendroidal identities. We observe that $\sigma_i\circ\partial\ne \id_T$ since we chose $\partial$ in a way that avoids this situation.
It follows that we obtain commutative diagrams for all $1<i\le k$:
\[
\xymatrix{T\ar[r]^-{\sigma_1}\ar@{.>}[d]_-{\sigma_i}& T\backslash v_1\ar[r]^-\partial\ar[d]_{\sigma'} &T\ar[d]^{\sigma_i}\\
T\backslash v_i\ar@{.>}[r]^-{\sigma''} &(T\backslash v_1)\backslash v_i \ar[r]^-{\partial'} &T\backslash v_i{\rlap ,}}
\]
where by the dendroidal identities the dotted vertical arrow can only be $\sigma_i$. We conclude that $(\sigma_i\partial\sigma_1)^*(u)=\sigma_i^*(u')$
for all $1<i\le k$ and
\[
x=\sigma_2^*(y_2)+\cdots+\sigma_{k}^*(y_{k}).
\]
This is a contradiction since $k$ was chosen to be minimal.
\end{proof}

\begin{lem}\label{direct sum 2}
For any dendroidal abelian group $A$, the dendroidal complexes $NA$ and $DA$ satisfy that $(NA)_T+(DA)_T=A_T$ for every tree $T\in\rpd$.
\end{lem}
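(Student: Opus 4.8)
The plan is to show the reverse inclusion $A_T\subseteq (NA)_T+(DA)_T$ by induction, mirroring the classical simplicial argument (as in \cite{weibel}) where every element is written as a normalized part plus a degenerate correction. The natural inductive parameter is the number of normal faces living on the maximal linear parts of $T$ that an element fails to be annihilated by; equivalently, one peels off normal faces one at a time. Concretely, I would fix $x\in A_T$ and attempt to write $x=n+d$ with $n\in (NA)_T$ and $d\in (DA)_T$ by correcting $x$ successively against each normal face $\tilde\partial\colon R\To T$.

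The key device is the following. If $\tilde\partial\colon R\To T$ is a normal face sitting on a maximal linear part $\iota\colon L_m\To T$, then by the combined relation~(\ref{combrel02}) there is a degeneracy $\sigma\colon T\To T\backslash v$ for which $\tilde\partial$ is a section, i.e.\ $\sigma\circ\tilde\partial=\id$. Given $x$, consider the correction $x-\sigma^*\tilde\partial^*(x)$. The second term lies in $(DA)_T$ since it is in the image of $\sigma^*$, and I would check that the new element is annihilated by $\tilde\partial^*$, using $\tilde\partial^*\sigma^*=(\sigma\tilde\partial)^*=\id$ on the relevant component. The crucial point, and the main technical obstacle, is to verify that correcting against one normal face does \emph{not} destroy the vanishing already achieved against the previously treated normal faces. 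This is exactly where the combined dendroidal relations~(\ref{combrel01}) and~(\ref{combrel02}) together with the linear ordering of Section~\ref{generorders} must be used: when $\tilde\partial$ and a previously handled normal face $\tilde\partial'$ sit on distinct maximal linear parts, or on the same one but in a controlled order, the commuting square of relation~(\ref{combrel01}) forces $\tilde\partial'^*\sigma^*$ to factor through an image of a degeneracy landing back in a kernel, so the earlier vanishing is preserved. One has to order the normal faces carefully (respecting the induced orders on each maximal linear part, and processing linear parts in a suitable order) so that each correction step only introduces degenerate terms orthogonal to the kernels already secured.

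Proceeding in this fashion, after finitely many steps (there are only finitely many normal faces of $T$, by Lemma~\ref{unique decomposition} and finiteness of the trees involved) I obtain an element $n$ annihilated by \emph{all} normal faces with codomain $T$, hence $n\in (NA)_T$, while the accumulated corrections $d=x-n$ all lie in $(DA)_T$ as finite sums $\sum\sigma^*(\cdots)$. This gives $x=n+d\in (NA)_T+(DA)_T$, as required. The base case, when $T$ has no normal faces at all, is immediate since then $(NA)_T=A_T$ and the sum is trivially everything; the corolla case and the special status of $\partial^{(\iota)}_0\colon L_0\To L_1$ noted in the previous section would be handled as the smallest instances of the induction. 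Combined with Lemma~\ref{direct sum 1}, this establishes the direct sum decomposition $A_T=(NA)_T\oplus(DA)_T$.
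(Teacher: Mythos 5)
Your skeleton is the same as the paper's: fix $x\in A_T$, correct it by degenerate terms $\sigma^*\partial^*(x)$ against normal faces, processing the maximal linear parts one at a time and the faces on each part in their induced order. But the two points you leave open are not routine details --- they are the proof --- and as stated your plan has genuine gaps. First, your proposed inductive parameter, the number of normal faces failing to annihilate the element, does not decrease. On a maximal linear part, if $\partial$ is a section of the degeneracy $\sigma$ and $\tilde\partial$ is the \emph{other} section of $\sigma$, then for $x'=x-\sigma^*\partial^*(x)$ one computes $\tilde\partial^*(x')=\tilde\partial^*(x)-(\sigma\tilde\partial)^*\partial^*(x)=\tilde\partial^*(x)-\partial^*(x)$, so killing $\partial$ can awaken $\tilde\partial$: the cardinality of the set of non-vanishing normal faces need not drop at all. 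The measure that actually works, and that the paper uses, is that the \emph{smallest} non-vanishing normal face on the linear part currently being processed strictly increases in the order of Section~\ref{generorders}, while the sets attached to the other linear parts do not grow; termination then follows because each part carries only finitely many faces.

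Second, the preservation statement you defer (``each correction step only introduces degenerate terms orthogonal to the kernels already secured'') is false unless you make two choices that you never specify: correct against the \emph{smallest} surviving normal face $\partial$ on the current linear part, and take $\sigma$ to be the \emph{biggest} degeneracy with $\sigma\partial=\id$. A normal face other than the smallest one on its linear part is a section of \emph{two} degeneracies; by the computation above, if you pick the smaller one, its other section lies \emph{below} $\partial$, so its vanishing --- already secured, since $\partial$ was minimal among the non-vanishing faces --- gets destroyed, and the process can lose ground indefinitely. With the min/max choices in place, the paper proves the precise claim that a normal face $\tilde\partial$ with $\tilde\partial^*(x)=0$ can have $\tilde\partial^*(x')\neq 0$ only if $\tilde\partial$ is connected to $\partial$ and $\tilde\partial>\partial$ in the order on that linear part, by filling squares built from relation~(\ref{combrel01}) and the elementary face relations --- exactly the mechanism you gesture at but do not carry out. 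So: right strategy, but the unproved ordering claim, the unmade choice of $\sigma$, and the incorrect termination measure are the substance of the lemma, not bookkeeping.
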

\begin{proof}
Fix an $x\in A_T$ and define
\[
N_x=\big\{\partial\colon S\To T\ \big|\ \partial \mbox{ is a normal face such that } \partial^*(x)\neq 0\big\}.
\]
We can assume that $N_x$ is not empty, otherwise $x\in (NA)_T$. Let
$k\in \N$ be the number of the maximal linear parts of $T$.
Partition $N_x$ into subsets $N_x=N_x^{(\iota_1)}\cup\cdots\cup
N_x^{(\iota_k)}$ for every maximal linear part $\iota_j\colon
L_{n_j}\To T$, where $N_x^{(\iota)}$ contains those elements of
$N_x$ which sit on $\iota$.

The goal is to write $x$ as $x=x_1+y_1$, where $y_1\in (DA)_T$ and $N_{x_1}^{(\iota_1)}=\emptyset$, while $|N_{x_1}^\iota|\leq |N_x^\iota|$ for the other linear parts $\iota\ne \iota_1$. If we succeed, we can iterate the process by finding a decomposition $x_1=x_2+y_2$ such that $x_2$ kills the set $N_{x_1}^{(\iota_2)}$ while it does not increase the size of the other sets $N_{x_1}^{(\iota)}$.
After $k$ steps we would arrive at a decomposition
\[
x=x_k+y_1+y_2+\cdots+y_k,
\]
where $N_{x_k}=\emptyset$, thus $x_k\in (NA)_T$ and $y_1+\cdots+y_k\in (DA)_T$, thereby finishing the proof.

To obtain such a decomposition of $x$, we proceed as follows. Let $\partial\in N_x^{(\iota_1)}$
be the smallest element in the order induced by $\iota_1\colon L_{n_1}\To T$. Let $\partial^*(x)=y$ and define $x'=x-\sigma^*(y)$ where $\sigma\colon T\to S$ is the
biggest such degeneracy in the order induced by $\iota_1$ for which $\sigma\partial=\id_{S}$.
It follows that
\[
\partial^*(x')=\partial^*(x)-(\sigma\partial)^*(y)=0.
\]
Now suppose that $\tilde\partial$ is any normal face such that $\tilde\partial^*(x)=0$. We are going to prove that $\tilde\partial^*(x')\neq 0$ can happen only if $\tilde\partial$ is connected to $\partial$ and $\tilde\partial> \partial$ in the order induced by $\iota_1$.
Indeed, on the one hand if $\tilde\partial$ is not connected to $\partial$ then it is obvious that $\sigma\tilde\partial\ne\id$. On the other hand, the reason we chose $\partial$ to be minimal and $\sigma$ maximal was that now $\sigma\tilde\delta\ne\id$ holds also whenever $\tilde\partial<\partial$ on the linear part $\iota_1$. Hence if $\tilde\partial$ obeys one of these cases, we can fill the following diagram of dendroidal identities
\[
\xymatrix{T\ar[r]^-{\sigma} &T\backslash v\ar[r]^-{\partial} &T\\
T'\ar[u]^-{\tilde\partial}\ar[r]^-{\sigma'} &T'\backslash
v\ar[u]^-{\partial'}\ar@{.>}[r]^-{\partial''} &T'{\rlap .}\ar@{.>}[u]_-{\tilde\partial}}
\]
It is immediate that in this diagram the dotted vertical arrow is $\tilde\partial$. Therefore
\[
\tilde\partial^*(x')=-\tilde\partial^*\sigma^*(y)=-\tilde\partial^*\sigma^*\partial^*(x)=-\sigma'^*\partial''^*
\tilde\partial^*(x)=0.
\]
Let us summarize what we managed to achieve with the $x=x'+\sigma(y)$ decomposition:
\begin{itemize}
\item[{\rm (i)}] The normal face $\partial$ satisfies $\partial(x)\ne 0$ and $\partial(x')=0$.
\item[{\rm (ii)}] For any normal face $\tilde\partial$ such that $\tilde\partial^*(x)=0$, we can have $\tilde\partial^*(x')\neq 0$ only if $\tilde\partial> \partial$ on the same linear part $\iota_1$.
\end{itemize}
Now we can apply the same process for $x'$ and the new smallest element in $N_{x'}^{(\iota_1)}$, and so on. In a finite number of steps we arrive to the desired decomposition $x=x_1+y_1$.
\end{proof}

\begin{prop}\label{direct sum}
For any dendroidal abelian group $A$ the associated Moore dendroidal complex decomposes as $MA=NA\oplus DA$.
\end{prop}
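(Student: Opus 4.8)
The plan is to assemble the proposition from the two preceding lemmas together with the fact, established just before Lemma~\ref{direct sum 1}, that $NA$ and $DA$ are genuine dendroidal subcomplexes of $MA$. Lemma~\ref{direct sum 1} gives $(NA)_T\cap(DA)_T=0$ and Lemma~\ref{direct sum 2} gives $(NA)_T+(DA)_T=A_T$ for every tree $T\in\rpd$; taken together these yield an internal direct sum decomposition $(MA)_T=A_T=(NA)_T\oplus(DA)_T$ of abelian groups at each level $T$. The only remaining task is to promote this levelwise statement to a direct sum decomposition in the category $\dCh$.

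For that I would verify that the structure maps of $MA$ respect the decomposition. Fix a face map $\delta\colon R\To T$ and write an arbitrary $x\in A_T$ uniquely as $x=x_N+x_D$ with $x_N\in(NA)_T$ and $x_D\in(DA)_T$. Because $NA$ and $DA$ are subcomplexes, $\delta^\sharp(x_N)\in(NA)_R$ and $\delta^\sharp(x_D)\in(DA)_R$, so $\delta^\sharp(x)=\delta^\sharp(x_N)+\delta^\sharp(x_D)$ is already the decomposition of $\delta^\sharp(x)$ relative to $(NA)_R\oplus(DA)_R$. Thus every structure map acts diagonally with respect to the splitting, which is exactly the assertion that $NA$ and $DA$ are complementary subcomplexes of $MA$, i.e.\ $MA=NA\oplus DA$ as dendroidal complexes.

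The entire substance of the result lives in Lemmas~\ref{direct sum 1} and~\ref{direct sum 2}, so I anticipate no real obstacle in this concluding step: once the graded-group splitting is available, its compatibility with the maps $\delta^\sharp$ is forced by $NA$ and $DA$ already being subcomplexes, and in particular no renewed case analysis over the types (inner, outer, normal, or adjacent to a degeneracy) of the face $\delta$ is required.
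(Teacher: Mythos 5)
Your proof is correct and follows essentially the same route as the paper: the paper's proof simply cites Lemmas~\ref{direct sum 1} and~\ref{direct sum 2}, relying tacitly on the earlier proposition that $NA$ and $DA$ are dendroidal subcomplexes of $MA$ to promote the levelwise splitting to one of complexes. Your concluding paragraph merely makes explicit this implicit step (that the structure maps $\delta^\sharp$ act diagonally because $NA$ and $DA$ are already subcomplexes), and it is a valid way to finish the argument.
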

\begin{proof}
It follows directly from Lemma~\ref{direct sum 1} and Lemma~\ref{direct sum 2}.
\end{proof}

\section{The Dold-Kan correspondence}
The canonical inclusion $i\colon\Delta\longrightarrow \rpd$ induces a restriction functor $i^*\colon \dAb\longrightarrow s\A$
which has a left adjoint $i_!$ given by Kan extension. The functor $i^*$ sends
a dendroidal abelian group $A$ to the simplicial abelian group
$$
i^*(A)_n = A_{i([n])}.
$$
Its left adjoint $i_!\colon s\A \longrightarrow \dAb$ is `extension
by zero', and sends a simplicial abelian group $A$ to the dendroidal
abelian group given by
$$
i_!(A)_T=\left\{
\begin{array}{cc}
A_n & \mbox{ if $T\cong i([n])$}, \\
\emptyset & \mbox{ otherwise}.
\end{array}
\right.
$$
We will define a right adjoint $\Gamma$ to the normalized dendroidal
complex functor $N\colon\dAb\longrightarrow \dCh$ and we will prove
that the pair $(N,\Gamma)$ forms an equivalence of categories. This
equivalence extends the classical Dold-Kan correspondence for
simplicial abelian groups and chain complexes in the following
precise sense. There is a commutative diagram of adjoint functors
\[
\xymatrix@R+3mm@C+3mm{ s\A\ar@<2pt>[r]^-{i_!}\ar@<-2pt>[d]_{N_s} &\dAb\ar@<-2pt>[d]_N\ar@<2pt>[l]^-{i^*} \\
\Ch\ar@<2pt>[r]^-{j_!}\ar@<-2pt>[u]_{\Gamma_s}&\dCh{\rlap ,}\ar@<2pt>[l]^-{j^*}\ar@<-2pt>[u]_{\Gamma}}.
\]
where $(j_!, j^*)$ is the adjunction described in~(\ref{dChadj}). Moreover the following relations hold:
\[
\begin{array}{ccccccc}
N_s i^*=j^*N, &&&& Ni_!=j_!N_s,\\
\Gamma_s j^*=i^*\Gamma, &&&& \Gamma j_!=i_!\Gamma_s,\\
i^*i_!=\id, &&&& j^*j_!=\id, \\
N\Gamma=\id, &&&& N_s\Gamma_s=\id,\\
\Gamma N \cong \id, &&&& \Gamma_s N_s\cong \id.
\end{array}
\]
The rest of this section is devoted to the construction of the
functor $\Gamma$. We denote by $\Omega_{mono}$ the subcategory of
$\rpd$ consisting of all the trees as objects and only monomorphisms
as maps. For every dendroidal complex $C$ there is a functor
$F_C\colon \Omega_{mono}^{\rm op}\To\A$ defined on objects by
$F_C(T)=C_T$ and on face maps by
\[
F_C(\partial)=
\begin{cases}
0 & \textnormal{if $\partial$ is normal,}\\
\sgn(\partial)\partial^\sharp & \textnormal{oherwise.}
\end{cases}
\]
Observe that $F_C$ is indeed a functor since the sign convention on
faces implies that commutative diagrams of dendroidal identities
involving faces are taken via $F_C$ to commutative diagrams of
abelian groups. This functor will play a role in the construction of
a right adjoint $\Gamma\colon \dCh\To \dAb$ to the normalized
dendroidal complex functor $N$.

If we assume that such a right adjoint exists, then there should be
a one-to-one correspondence between the sets of morphisms
\[
\dCh(NA,C)\cong \dAb(A, \Gamma C)
\]
for any dendroidal complex $C$ and dendroidal abelian group $A$. If one takes $A$ to be the representable $\Z\rpd[T]$, then
\begin{equation}\label{gamma}
\dCh(N\Z\rpd[T], C)\cong\dAb(\Z\rpd[T],\Gamma C)\cong (\Gamma C)_T
\end{equation}
by the Yoneda lemma. Moreover, this correspondence has to be an isomorphism of groups, showing us a way to define $(\Gamma C)_T$ for every tree $T$.
One can unpack the left hand side of equation (\ref{gamma}) to arrive at the definition
\[
(\Gamma C)_T=\bigoplus_{r\colon T\twoheadrightarrow R}C_R,
\]
where $r$ runs through all epimorphisms in $\rpd$ with domain $T$. In the direct sum above we will denote by $C_R^r$ the component $C_R$ corresponding to an epimorphism~$r$.

We still have to define $\Gamma C$ on the maps of $\rpd$. Suppose that $f\colon S\To T$ is such a map and define $f^*\colon(\Gamma C)_T\To (\Gamma C)_S$ in the following way. Let $r\colon T\twoheadrightarrow R$ be an epimorphism in $\rpd$. The map $r\circ f\colon S\To R$ has a unique factorization $d\circ s$ by Lemma~\ref{unique decomposition}:
\[
\xymatrix{S\ar@{>>}[r]^s\ar[d]_f &S'\ar@{>->}[d]^d\\
T\ar@{>>}[r]^r & R{\rlap .}}
\]
We define $f^*$ on the component $C_R^r$ as the composite
\[
\xymatrix@C+5mm{(f^*)^r\colon C_R^r\ar[r]^-{F_C(d)} &C_{S'}^s\ar@{>->}[r] &(\Gamma C)_S}.
\]
We have finished the definition of $\Gamma$ on objects. Let us check that $\Gamma C$ is indeed a dendroidal abelian group for every $C\in \dCh$. It is easy to see that $\Gamma C(\id_T)=\id\colon \Gamma C_T\To \Gamma C_T$. Suppose that $f\colon S\To T$ and $g\colon U\To S$ are two maps in $\rpd$. We need to check that for any epimorphism $r\colon T\twoheadrightarrow R$, the components $((fg)^*)^r$ and $(g^*f^*)^r$ are the same. Indeed, since the epi-mono factorizations of $rfg$, and of $rf$ followed by $sg$ are unique, we infer that $d=d_fd_g$ in the following diagram.
\[
\xymatrix{U\ar[d]_g\ar@{>>}[r]^u &U'\ar@/^6mm/[dd]^d\ar@{>->}[d]^{d_g}\\
S\ar[d]_f\ar@{>>}[r]^s&S'\ar@{>->}[d]^{d_f}\\
T\ar@{>>}[r]^r &R}
\]
Since $F_C$ is a functor, this implies the required equality.

It is easy to check that the obvious definition of $\Gamma$ on maps of dendroidal complexes is functorial. Now we can prove the following propositions.

\begin{prop}\label{counit}
For every tree $T\in\rpd$ the abelian groups  $(N\Gamma C)_T$ and $C_T$ are equal.
\end{prop}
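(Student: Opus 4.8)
The plan is to identify the normalized part $(N\Gamma C)_T$ with the single summand of $(\Gamma C)_T=\bigoplus_{r\colon T\twoheadrightarrow R}C_R^r$ corresponding to the identity epimorphism, namely $C_T^{\id_T}$, which is canonically isomorphic to $C_T$. Here $(N\Gamma C)_T=\bigcap_{\tilde\partial}\ker(\tilde\partial^*)$, where $\tilde\partial$ runs over the normal faces with codomain $T$ and $\tilde\partial^*$ is the structure map of the dendroidal abelian group $\Gamma C$. (When $T=\eta$ there are neither normal faces nor nontrivial epimorphisms out of $T$, so both sides equal $C_\eta$; below I assume $|V(T)|\ge 1$.) So the goal splits into two inclusions, $C_T^{\id_T}\subseteq(N\Gamma C)_T$ and the reverse.

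First I would establish $C_T^{\id_T}\subseteq(N\Gamma C)_T$. Let $\delta\colon R\To T$ be a normal face. On the summand $C_T^{\id_T}$ the map $\delta^*$ is computed from the factorization of $\id_T\circ\delta=\delta$; since $\delta$ is a monomorphism this factorization is $\id_R$ followed by $\delta$, so $\delta^*$ acts on this summand as $F_C(\delta)$. By the definition of $F_C$ we have $F_C(\delta)=0$ because $\delta$ is normal. Hence $\delta^*$ annihilates $C_T^{\id_T}$ for every normal face $\delta$, giving the inclusion.

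The substance of the proof, and where I expect the main difficulty, is the reverse inclusion. Suppose $x\in(N\Gamma C)_T$ has a nonzero component $x^{r}$ for some nontrivial epimorphism, and choose $r_0\colon T\twoheadrightarrow R_0$ with $x^{r_0}\ne 0$ for which $\ell(r_0)=|V(T)|-|V(R_0)|$ (the number of degeneracies) is minimal among the nontrivial nonzero components. I would factor $r_0=\rho\circ\sigma$, peeling off a single degeneracy $\sigma\colon T\twoheadrightarrow T\backslash v$, and take $\partial\colon T\backslash v\To T$ to be the \emph{normal} section of $\sigma$: the lower of the two sections always corresponds to a $\partial^{(\iota)}_j$ with $j<n$ on the maximal linear part $\iota$ through $v$, hence is normal. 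The idea is to compute $\partial^*(x)$ and read off its component in the summand $C_{R_0}^{\rho}$.

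A component $x^{r}$ contributes to $C_{R_0}^{\rho}$ only when the epi-mono factorization of $r\circ\partial$ (which is unique by Lemma~\ref{unique decomposition}) has epi part exactly $\rho$, say $r\partial=d_r\circ\rho$ with $d_r$ a monomorphism; the contribution is then $F_C(d_r)(x^{r})$. Counting vertices gives $\ell(r)=\ell(r_0)-\mathrm{fc}(d_r)$, where $\mathrm{fc}(d_r)$ is the number of faces in $d_r$, so every contributor satisfies $\ell(r)\le\ell(r_0)$. Minimality of $\ell(r_0)$ then forces each nonzero contributor to have either $r=\id_T$ or $\mathrm{fc}(d_r)=0$. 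The case $r=\id_T$ contributes $F_C(\partial)(x^{\id_T})=0$ since $\partial$ is normal, and $\mathrm{fc}(d_r)=0$ means $d_r=\id$, so that $R=R_0$ and $r\partial=\rho$. The hard part is the last step: one must check that $r_0=\rho\sigma$ is the \emph{only} epimorphism $r$ with $r\partial=\rho$. This is where planarity and the precise behaviour of sections of degeneracies enter: since $\partial$ is a section of $\sigma$ whose image omits exactly the one edge of $v$ not chosen by $\partial$, two such epimorphisms agree off that edge, and the operad structure together with the combined relation~(\ref{combrel02}) forces $r$ to collapse $v$ in the same way as $\sigma$, so $r=r_0$. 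Consequently the $C_{R_0}^{\rho}$-component of $\partial^*(x)$ equals $x^{r_0}\ne 0$, whence $\partial^*(x)\ne 0$. This contradicts $x\in\ker(\partial^*)$, since $\partial$ is normal, and therefore $x$ lies in $C_T^{\id_T}$, completing the proof.
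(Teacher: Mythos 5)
Your first inclusion, $C_T^{\id_T}\subseteq(N\Gamma C)_T$, is correct and is exactly the paper's argument for that containment: on the identity summand a face $\delta$ acts by $F_C(\delta)$, which vanishes when $\delta$ is normal. The gap is in the reverse inclusion, precisely at the step you flag as hard: the claim that $r_0=\rho\sigma$ is the \emph{only} epimorphism $r$ with $r\partial=\rho$ is false. In $\rpd$, as in $\Delta$, a face map is in general a section of \emph{two} distinct degeneracies: if $e$ is an inner edge whose lower vertex $w$ and upper vertex $v$ are both unary, then $T/e$ is isomorphic to both $T\backslash v$ and $T\backslash w$, and $\sigma_v\partial_e=\id=\sigma_w\partial_e$ (the dendroidal avatar of the simplicial identities $s_jd_j=\id=s_jd_{j+1}$). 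Concretely, take $T=L_3$ with vertices $v_0,v_1,v_2$ from bottom to top and let $\partial=\partial_1\colon L_2\To L_3$ be the (normal) inner face contracting edge $1$; then $\sigma_{v_0}\partial_1=\id_{L_2}=\sigma_{v_1}\partial_1$. So with $r_0=\sigma_{v_1}$, $v=v_1$, $\rho=\id_{L_2}$ (a choice your prescription allows, since $\sigma_{v_0}$ and $\sigma_{v_1}$ have the same length), the epimorphism $r=\sigma_{v_0}$ also satisfies $r\partial=\rho$ without collapsing $v$. Your appeal to the operad structure and relation~(\ref{combrel02}) does not exclude this: $\partial_1$ sends the merged generator to $v_0\circ_1 v_1$, and $\sigma_{v_0}(v_0\circ_1 v_1)=\id\circ\sigma_{v_0}(v_1)$ is still a single generator because the \emph{lower} vertex is the one being collapsed. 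Consequently the $C_{R_0}^{\rho}$-component of $\partial^*(x)$ is not $x^{r_0}$ but $\sum_r x^r$ over all nontrivial $r$ with $r\partial=\rho$, and your minimality argument yields only $x^{\sigma_{v_0}}+x^{\sigma_{v_1}}=0$, which is no contradiction since distinct components may a priori cancel. (A smaller slip: for $T=L_1$ the paper's corolla convention makes the root inclusion, i.e.\ the section omitting the \emph{upper} edge, the normal one, so ``the lower section is normal'' needs that exception.)

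It is worth comparing with the paper, which avoids this difficulty entirely. There the proposition is deduced from Proposition~\ref{direct sum} applied to $\Gamma C$: one has two direct-sum decompositions $(\Gamma C)_T=C_T^{\id_T}\oplus\bigoplus_{r\ne\id_T}C_R^r=(N\Gamma C)_T\oplus(D\Gamma C)_T$, and it suffices to check two easy containments, namely yours ($C_T^{\id_T}\le(N\Gamma C)_T$) and $\bigoplus_{r\ne\id_T}C_R^r\le(D\Gamma C)_T$ (each non-identity summand is hit by $\sigma^*$ for any degeneracy $\sigma$ through which $r$ factors); the two decompositions then force equality. The cancellation problem you ran into is exactly what the ordering arguments in Lemma~\ref{direct sum 1} and Proposition~\ref{unit} are designed to circumvent: there one chooses the degeneracy that is \emph{bottom-most} on its linear part and the section omitting the edge below its vertex, and such a section is a right inverse of only that one degeneracy (e.g.\ the root face $\partial_0$ of $L_3$ is a section of $\sigma_{v_0}$ alone). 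Your direct approach could likely be repaired by strengthening the choice of $r_0$ in this way — minimal length alone is not enough — but as written the key uniqueness step fails.
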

\begin{proof}
We have two decompositions of the abelian group $(\Gamma C)_T$ into a direct sum of subgroups. First, by definition
\[
(\Gamma C)_T=C_T^{\id_T}\oplus \mathop{\bigoplus_{T\stackrel{r}{\twoheadrightarrow}R}}_{r\ne\id_T} C_R^r
\]
and second, by Proposition \ref{direct sum}
\[
(\Gamma C)_T=(N\Gamma C)_T\oplus (D\Gamma C)_T.
\]
Hence it is enough to prove that $\bigoplus_{r\ne\id_T} C_R^r
\le (D\Gamma C)_T$ and $C_T^{\id_T}\le (N\Gamma C)_T$.

To see the first assertion we pick an epimorphism $r\colon T\twoheadrightarrow R$, $r\ne \id_T$ and prove that the corresponding component $C_R^r\le (\Gamma C)_T$ is in the image of a degeneracy. From our choice it follows that $r$ decomposes as $r=\sigma\circ r'$ where $\sigma\colon T\To S$ is a degeneracy and $r'\colon S\twoheadrightarrow R$ is another epimorphism (possibly the identity). Let us look at the image of $\sigma^*\colon(\Gamma C)_S\To (\Gamma C)_T$ on the component $C_R^{r'}$. Since the unique epi-mono factorization of $r'\sigma$ is
\[
\xymatrix{T\ar@{>>}[r]^r\ar[d]_\sigma &R\ar@{=}[d]\\
S\ar@{>>}[r]^{r'} &R{\rlap ,}}
\]
we can conclude that $\sigma^*$ sends the component $C_R^{r'}$ to the component $C_R^r$.

The second assertion follows as well. Indeed, for an arbitrary normal face $\partial\colon S\To T$ the induced map of abelian groups $\partial^*\colon(\Gamma C)_T\To (\Gamma C)_S$ vanishes on $C_T^{\id_T}$ since $F_C(\partial)=0$ by definition.
\end{proof}

\begin{prop}\label{unit}
Let $A$ be a dendroidal abelian group and $r\colon T\twoheadrightarrow R$ an epimorphism in $\rpd$. If we define $(\Psi_A)_T^r$ to be the composite
\[
\xymatrix{(NA)_R^r\ \ar@{>->}[r] &A_R\ar[r]^-{r^*} &A_T},
\]
then the induced map $(\Psi_A)_T\colon(\Gamma NA)_T\To A_T$ is an isomorphism which is natural in both $A$ and $T$.
\end{prop}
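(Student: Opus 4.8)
The plan is to realize $(\Psi_A)_T$ as the dendroidal analogue of the classical decomposition of a simplicial abelian group into degenerate copies of its normalization, exploiting the splitting $MA=NA\oplus DA$ just established in Proposition~\ref{direct sum}. First I would settle the two naturality assertions, and then prove that each $(\Psi_A)_T$ is bijective by induction on the number of vertices of $T$. Naturality in $A$ is immediate, since $(\Psi_A)_T^r$ is the composite of the inclusion $(NA)_R\hookrightarrow A_R$ with $r^*$, and both are natural in $A$. For naturality in $T$ one must check, for every $f\colon S\To T$ in $\rpd$ and every epimorphism $r\colon T\twoheadrightarrow R$, that applying the structure map of $A$ after $(\Psi_A)_T$ agrees with applying $(\Psi_A)_S$ after the structure map of $\Gamma NA$. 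Writing the unique factorization $rf=d\circ s$ with $s$ a composite of degeneracies and $d$ a composite of faces (Lemma~\ref{unique decomposition}), this reduces to the identity $s^*d^*x=s^*F_{NA}(d)(x)$ for $x\in(NA)_R$. The key point is that $F_{NA}$ agrees with the honest pullback on normalized elements: when $d$ is a single face and $x$ is killed by every normal face with codomain $R$, the sign convention together with the three clauses defining the structure maps of $MA$ collapses $F_{NA}(d)(x)$ to $d^*x$ in each case (normal, non-connected, and the $\partial_n$ case). Iterating over the faces composing $d$ gives the required equality, so $\Psi_A$ is a map of dendroidal abelian groups.

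For bijectivity I would argue by induction on $|V(T)|$. If $T$ has no univalent vertex then $(DA)_T=0$ and the identity is the only epimorphism out of $T$, so $(\Psi_A)_T$ is the inclusion $(NA)_T\hookrightarrow A_T=(NA)_T$, an isomorphism. In general, split $(\Gamma NA)_T=(NA)_T^{\id_T}\oplus\bigoplus_{r\ne\id_T}(NA)_R^r$ and compare with $A_T=(NA)_T\oplus(DA)_T$ from Proposition~\ref{direct sum}. On the identity summand $(\Psi_A)_T$ is the inclusion onto $(NA)_T$, so it remains to see that the non-identity components map isomorphically onto $(DA)_T$. Surjectivity is the easy half: every non-identity epimorphism factors as $r=r'\circ\sigma_v$ with $\sigma_v\colon T\To T\backslash v$ a degeneracy, and applying the inductive hypothesis to $T\backslash v$ yields $\sigma_v^*(A_{T\backslash v})=\sum_{r'}(r'\sigma_v)^*(NA)_R$; summing over the univalent vertices $v$ and using $(DA)_T=\sum_v\sigma_v^*(A_{T\backslash v})$ shows the non-identity components cover $(DA)_T$.

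The main obstacle is injectivity, i.e.\ that the sum $\sum_{r\ne\id_T}r^*(NA)_R$ is direct; here I would follow the simplicial template via a projection argument. Since the degeneracies of $T$ satisfy the simplicial relations $\sigma_j\sigma_i=\sigma_i\sigma_{j+1}$, every epimorphism out of $T$ has a canonical normal form as a sorted composite of degeneracies, and this lets me attach to each $r$ a section $d_r$ (a composite of normal faces) with $r\circ d_r=\id_R$ via the combined relation~(\ref{combrel02}). Given a relation $\sum_r r^*(x_r)=0$ with $x_r\in(NA)_R$, I would apply $d_r^*$ and then project onto the normalized summand $(NA)_R$: the diagonal term returns $x_r$, while for $r'\ne r$ the composite $r'\circ d_r$ fails to be invertible and hence, by Lemma~\ref{unique decomposition}, factors through a proper degeneracy, so that $d_r^*r'^*(x_{r'})$ lies in $(DA)_R$ and contributes nothing after projection. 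Verifying this separation of the off-diagonal composites, which rests on the combined and degeneracy identities of Section~\ref{subsec:Dendroidal_identities}, is the technical heart of the proof; granting it, directness holds, and together with the surjectivity above each $(\Psi_A)_T$ is an isomorphism, completing the induction.
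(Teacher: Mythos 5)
Your overall architecture coincides with the paper's: naturality is reduced to the identity $F_{NA}(d)(x)=d^*(x)$ for normalized $x$ (which is correct, and which the paper leaves as ``routinely verified''), bijectivity is split via Propositions \ref{counit} and \ref{direct sum} into the identity component and the degenerate component, and surjectivity of the degenerate part is proved by induction on $|V(T)|$ essentially as in the paper. The gap is in injectivity, exactly at the step you yourself flag as the technical heart, and it is not a removable technicality: your separation claim is false. It is not true that $r'\neq r$ forces $r'\circ d_r$ to be non-invertible. Already in the simplicial range of $\rpd$: take $T=L_3$ and $r=\sigma_1\colon L_3\To L_2$. The only sections of $\sigma_1$ are $\partial_1$ and $\partial_2$, and $\sigma_0\partial_1=\id_{L_2}$, $\sigma_2\partial_2=\id_{L_2}$. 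So \emph{whichever} section $d_r$ your sorted normal form attaches to $r$, there is an $r'\in\{\sigma_0,\sigma_2\}$ with $r'\neq r$ and $r'\circ d_r=\id$; the off-diagonal term $d_r^*r'^*(x_{r'})$ is then $x_{r'}$ itself, which lies in $(NA)_{L_2}$ and survives your projection onto the normalized summand. A second, independent failure: even when $r'\circ d_r$ is not invertible it need not factor through a proper degeneracy, since it can be a proper monomorphism; e.g.\ for $r\colon L_2\To\eta$ and $r'=\sigma_0\colon L_2\To L_1$ the composite $r'\circ d_r$ can be the non-normal face $\partial_1\colon\eta\To L_1$, whose pullback does not annihilate $(NA)_{L_1}=\ker(\partial_0^*)$.

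The paper's proof is engineered precisely around this phenomenon, and the ingredients your plan is missing are the following. The paper fixes one specific section $d_r$ (each degeneracy $\sigma_v$ is split by the face omitting the edge \emph{below} $v$), then introduces a partial order on the set $\X_{T,R}$ of epimorphisms with nonzero component, designed so that (\ref{order}) holds: $s\circ d_r=\id_R$ implies $r\le s$. Injectivity is then obtained by picking a \emph{maximal} $r\in\X_{T,R}$: for that $r$ alone, the only $s\in\X_{T,R}$ with $s\circ d_r=\id$ is $s=r$, so the normalized component of $d_r^*(x)$ is exactly $x^r$, whence $x^r=0$, a contradiction; the components are eliminated one at a time, in an order, rather than all simultaneously. (A further decomposition of epimorphisms along the distinct maximal linear parts of $T$, with a componentwise order, is needed for the general case.) In other words, what your argument needs is not a cleverer choice of sections --- the $L_3$ example shows that no choice makes the uniform diagonal/off-diagonal separation true, since every section of the middle degeneracy is also a section of a neighbouring one --- but the ordering-and-maximality device; without it the induction cannot be completed.
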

\begin{proof}
The proof of naturality is routinely verified. In what follows we will write $\Psi_T$ instead of $(\Psi_A)_T$ to simplify the notation. The first observation is that $\Psi_T$ decomposes as a direct sum
\[
 N\Psi_T\oplus D\Psi_T\colon (N\Gamma (NA))_T\oplus (D\Gamma (NA))_T\To (NA)_T\oplus (DA)_T,
\]
and by Proposition \ref{counit} the $N\Psi_T$ component is $\id\colon (NA)_T\To (NA)_T$. Hence in order to conclude that $\Psi_T$ is an isomorphism, it is enough to prove that $D\Psi_T$ is surjective and injective.

We proceed by induction on the number $n$ of vertices of $T$. If $n=0$ then $T=\eta$ and $(D\Gamma NA)_T=0=(DA)_T$.
Suppose that $D\Psi_S$ is surjective and injective for every tree $S$ with less than $n$ vertices and let $T$ be a tree with $n$ vertices.
Let $x$ be in the image of $\sigma^*\colon A_S\To A_T$  for some degeneracy $\sigma\colon T\To S$ and look at the commutative diagram
\[
\xymatrix{(\Gamma N A)_S\ar[r]^-{\Psi_S}\ar[d]_{\sigma^*} & A_S\ar[d]^{\sigma^*}\\
(\Gamma N A)_T\ar[r]^-{\Psi_T} &A_T.}
\]
Since $S$ has less vertices than $T$, we have that $D\Psi_S$ is surjective and $x=D\Psi_T(y)$ for some $y\in(D\Gamma N A)_T$. Hence $D\Psi_T$ is surjective.

Let us prove that $D\Psi_T$ is also injective. Suppose that $x\in \ker D\Psi_T$ and write
\[
x=\sum_{ \stackrel {r}{T\twoheadrightarrow R}} x^r.
\]
For any tree $R$ let $\X_{T,R}=\{r\colon T\twoheadrightarrow R\mid x^r\ne 0\}$ and suppose that $\X_{T,R}\neq~\emptyset$ for every $R$.
First we deal with the special case when $\X_{T,R}$ consists only of
epimorphisms $r$ occurring in the same maximal linear part of $T$.
For each $r\colon T\twoheadrightarrow R$ we choose a specific
section $d_r$, as follows. If $\sigma\colon T\to T\backslash v$ is a
degeneracy and the vertex $v$ has adjacent edges $e,f$ with $f$
situated above $e$
\[
\xy<0.08cm,0cm>:
(0,10.5)*{\vdots};
(0,-8)*{\vdots};
(0,6)*=0{}="1";
(0,0)*=0{\bullet}="2";
(0,-6)*=0{}="3";
(2,0)*{\scriptstyle v};
(-2,3)*{\scriptstyle f};
(-2,-3)*{\scriptstyle e};
"1";"2" **\dir{-};
"2";"3" **\dir{-};
\endxy
\]
then there is a unique face map $\partial\colon T\backslash v \To T$
which omits the edge $e$. This face map satisfies
$\sigma\partial=\id_{T\backslash v}$. If $r$ decomposes as
$r=\sigma_1\circ\sigma_2\circ\cdots\circ\sigma_k$ into degeneracies
then define the section
$d_r=\partial_k\circ\cdots\circ\partial_2\circ\partial_1$ of $r$
where $\partial_i$ is picked for $\sigma_i$ in the way described
above. Note that the dendroidal identities ensure that any other
decomposition of $r$ yields the same section.

Next we define a partial order on $\X_{T,R}$ as follows. If $r,s\in \X_{T,R}$ and for every edge $e$ sitting on the relevant linear part of $R$ the edge $d_r(e)$ is equal to or is below $d_s(e)$, then we say that $r\le s$. We observe that
\begin{equation}\label{order}
sd_r=\id_R \textnormal{ for some } r, s\in \X_{T,R} \textnormal{ implies }r\le s.
\end{equation}

Pick a maximal $r\in \X_{T,R}$ with respect to the order defined above. By (\ref{order}) and the maximality of $r$, the map $d_r$ satisfies
that $sd_r=\id$ implies $s=r$, hence we can conclude that in the commutative diagram
\[
\xymatrix{(\Gamma N A)_T\ar[r]^-{\Psi_T}\ar[d]_{d_r^*} & A_T\ar[d]^{d_r^*}\\
(\Gamma N A)_R\ar[r]^-{\Psi_R} & A_R,}
\]
on the left-hand side $N(d_r^*(x))=x^r$. Since $d_r^*\Psi_T=\id$ and $N\Psi_R=\id$, we infer that $x^r=0$, which is a contradiction.

We still need to deal with the general case, when $\X_{T,R}$ contains epimorphisms which are not necessarily situated on a fixed maximal linear part of $T$. To do so, suppose that $T$ has $k$ different maximal linear parts $L_{n_i}\rightarrowtail T$, $i\in\{1,\ldots,k\}$. Decompose each $r\in \X_{T,R}$ to $r=r_1\circ\cdots\circ r_k$ where $r_k\colon T\twoheadrightarrow R_{k-1}$ is located on the maximal linear part $L_{n_k}\rightarrowtail T$, $r_{k-1}\colon R_{k-1}\twoheadrightarrow R_{k-2}$ sits on the obvious maximal linear part $L_{n_{k-1}}\rightarrowtail R_{k-1}$ of the intermediate tree $R_{k-1}$, and so on. (Note that an epimorphism $r_i$ that appears in such a decomposition can be the identity.) We  can define a section $d_r$ of $r$ in the same way as in the special case above, moreover $d_r$ decomposes as
\[
d_r=d_{r_1}\circ\cdots \circ d_{r_k},
\]
where $d_{r_i}$ is a section of $r_i$. We can define a partial order on $\X_{T,R}$ as follows. Let $r$, $s\in \X_{T,R}$ have the associated decompositions
\[
r=r_1\circ\cdots \circ r_k \quad\textnormal{and}\quad s=s_1\circ\cdots \circ s_k.
\]
We say that $r\le s$ if for every $i$ there exist intermediate trees $R_i, R_{i-1}$ such that $s_i, r_i\in \X_{R_i,R_{i-1}}$ and $r_i\le s_i$ in the partial order defined in the special case above.

Again, if $sd_r=\id_R$ then $r\le s$ and we can mimic the rest of the proof of the special case to conclude that $x^r=0$ for a maximal $r\in \X_{T,R}$, arriving again to a contradiction.
\end{proof}

Now we are ready to prove the Dold-Kan correspondence theorem.

\begin{thm}\label{dold kan}
 The functors $N\colon \dAb\To \dCh$ and $\Gamma\colon\dCh\To \dAb$ form an equivalence of categories.
\end{thm}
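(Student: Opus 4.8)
The plan is to show that $N$ and $\Gamma$ are mutually quasi-inverse by exhibiting natural isomorphisms $\Gamma N\cong\id_{\dAb}$ and $N\Gamma\cong\id_{\dCh}$; since both composites are then naturally isomorphic to identities, $N$ and $\Gamma$ form an equivalence. The first isomorphism is already in hand: Proposition~\ref{unit} provides maps $(\Psi_A)_T\colon(\Gamma NA)_T\To A_T$ that are isomorphisms natural in both $A$ and $T$, so they assemble into a natural isomorphism $\Psi\colon\Gamma N\To\id_{\dAb}$. Nothing further is needed for this direction.

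For the second isomorphism I would use Proposition~\ref{counit}, which identifies $(N\Gamma C)_T$ with $C_T$ for every tree $T$ by means of the inclusion of the summand $C_T^{\id_T}$ into $(\Gamma C)_T$. Writing $\epsilon_C\colon N\Gamma C\To C$ for the degreewise map given by this identification, it remains to verify that $\epsilon_C$ is in fact a morphism of dendroidal complexes (not merely an isomorphism of $\rpd$-graded abelian groups) and that it is natural in $C$. Naturality in $C$ is immediate from the componentwise definitions of $N$ and $\Gamma$ on morphisms, so the real content is the compatibility of $\epsilon_C$ with the structure maps.

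I expect this compatibility check to be the main obstacle, and I would carry it out by unwinding the definition of $\Gamma(\delta^*)$ together with the Moore-complex structure on $M\Gamma C$ for each face $\delta\colon R\To T$. The key observation is that on the component $C_T^{\id_T}$ the epi--mono factorization of $\delta\circ\id_T=\delta$ is just $\id_R\circ\delta$, so $\Gamma$ sends this component into $C_R^{\id_R}$ via $F_C(\delta)$, which is $0$ when $\delta$ is normal and $\sgn(\delta)\,\delta^\sharp$ otherwise. One then distinguishes the three cases defining the Moore structure map. When $\delta$ is not normal and not connected to a normal face, the structure map is $\sgn(\delta)\,\delta^*$, and the two signs cancel so that its restriction to $C_T^{\id_T}$ is exactly the structure map $\delta^\sharp$ of $C$. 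When $\delta=\partial^{(\iota)}_n$ on a maximal linear part, the structure map is $\sum_{i=0}^n\sgn(\partial_i)\,\partial_i^*$; here every $\partial_i$ with $i<n$ is normal, so $F_C(\partial_i)=0$ kills those summands, and only the $i=n$ term survives, again yielding $\delta^\sharp$ on $C$ after the signs cancel. Normal faces act as zero on both sides. The delicate point is precisely this sign bookkeeping, matching the summands $\sgn(\partial_i)\,\partial_i^*$ against the normality pattern on each maximal linear part; everything else is formal.

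Granting these computations, $\epsilon_C$ intertwines all structure maps and hence is an isomorphism in $\dCh$, giving a natural isomorphism $\epsilon\colon N\Gamma\To\id_{\dCh}$. Combined with $\Psi\colon\Gamma N\To\id_{\dAb}$, this shows that $N$ and $\Gamma$ are mutually inverse equivalences of categories, which completes the proof.
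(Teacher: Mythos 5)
Your proposal is correct and takes essentially the same route as the paper, whose one-line proof likewise rests on Propositions~\ref{counit} and \ref{unit} (packaged there as an adjoint equivalence with unit $\Psi^{-1}$ and counit the identity). Your explicit check that the degreewise identification $(N\Gamma C)_T = C_T^{\id_T}$ respects the structure maps --- the sign cancellation $\sgn(\delta)F_C(\delta)=\delta^\sharp$ and the vanishing of the normal summands in the maximal-linear-part case --- is precisely the detail the paper leaves implicit, so it is an elaboration of the same argument rather than a different one.
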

\begin{proof}
Propositions \ref{counit} and \ref{unit} imply that $N$ and $\Gamma$ together form an adjoint equivalence where the unit of the adjunction is the natural isomorphism $\Psi^{-1}$ of Proposition \ref{unit} and the counit is the
identity.
\end{proof}


\begin{thebibliography}{999}
\bibitem{bergermoerdijk} C. Berger \and I. Moerdijk, {\em Resolution of coloured operads and rectification of homotopy
algebras.} Categories in algebra, geometry and mathematical physics, Contemp. Math. {\bf 431}, AMS, Providence, RI, 2007.
\bibitem{dold} A. Dold, {\em Homology of symmetric products and other functors of complexes}.  Ann. of Math. (2) {\bf 68},
1958.
\bibitem{BV73} J. M. Boardman \and R. M. Vogt, \textit{Homotopy invariant
algebraic structures on topological spaces}. Lecture Notes in
Math., Vol. 347. Springer-Verlag, Berlin-New York, 1973.
\bibitem{moerdijkcisinski} D.-C. Cisinski \and I. Moerdijk, {\em Dendroidal sets as models for homotopy operads}.
{\tt arXiv:0902.1954}.
\bibitem{joyal} A. Joyal, {\em Quasi-categories and Kan complexes}.
J. Pure Appl. Algebra {\bf 175} (2002), no. 1-3, 207--222.
\bibitem{kan} D. Kan, {\em Functors involving c.s.s. complexes}. Trans. Amer. Math. Soc. {\bf 87}, 1958.
\bibitem{leinster} T. Leinster, {\em Higher operads, higher categories}. London Math. Soc. Lecture Note Ser. {\bf
298}, CUP, 2004.
\bibitem{lurie} J. Lurie, {\em Higher Topos Theory}. Ann. of Math. Stud. 170. Princeton University Press,
Princeton, NJ, 2009.
\bibitem{markl} M. Markl, S. Shnider \and J. Stasheff, {\em Operads in Algebra, Topology and Physics}. Math. Surveys Monogr. {\bf 96}, AMS, Providence, RI, 2002.
\bibitem{moerdijkweiss} I. Moerdijk \and I. Weiss, {\em Dendroidal Sets}. Algebr. Geom. Topol. {\bf 7} (2007),
1441--1470.
\bibitem{moerdijkweiss2} I. Moerdijk \and I. Weiss, {\em On inner Kan complexes in the category of dendroidal sets}.
Adv. Math. {\bf 221} (2009), no. 2, 343--389.
\bibitem{pirashvili} T. Pirashvili, {\em Dold-Kan type theorem for $\Gamma$-groups}. Math. Ann. {\bf 318} (2000), no. 2, 277--298.
\bibitem{slominska}J. S{\l}omi{\'n}ska, {\em Dold-Kan type theorems and Morita equivalences of functor categories}. J. Algebra {\bf 274}
(2004), no. 1, 118--137.
\bibitem{weibel} C. Weibel, {\em An introduction to homological algebra}. Cambridge Stud. Adv. Math.
{\bf 38}, CUP, 1994.
\bibitem{Wei07} I. Weiss, {\em Dendroidal sets}. PhD. Thesis, University of Utrecht, 2007.
\end{thebibliography}
\end{document}